\newif\ifshowcomments
\newtheorem{theorem}{Theorem}[section]
\newtheorem{lemma}[theorem]{Lemma}
\newtheorem{proposition}[theorem]{Proposition}
\newtheorem{letterthm}{Theorem}
\newtheorem*{introqu}{Question}
\newtheorem*{introconj}{Conjecture}
\newtheorem*{rep@theorem}{\rep@title}
\newcommand{\newreptheorem}[2]{%
\newenvironment{rep#1}[1]{%
 \def\rep@title{#2 \ref{##1}}%
 \begin{rep@theorem}}%
 {\end{rep@theorem}}}
\theoremstyle{definition}
\newtheorem{definition}[theorem]{Definition}
\theoremstyle{remark}
\newtheorem{remark}[theorem]{Remark}
\newtheorem{example}[theorem]{Example}
\newcommand{\C}{\mathbb{C}}
\newcommand{\Z}{\mathbb{Z}}
\DeclareMathOperator{\Aut}{Aut}
\DeclareMathOperator{\Mod}{Mod}
\DeclareMathOperator{\Out}{Out}
\DeclareMathOperator{\PMod}{PMod}
\newcommand{\curlyO}{\mathcal{O}}
\newcommand{\curlyP}{\mathcal{P}}
\newcommand{\into}{\hookrightarrow}
\newcommand{\comment}[1]{\marginpar{\sffamily{\tiny #1
\par}\normalfont}}
\newcommand{\comment}[1]{}
\title{The congruence subgroup property for mapping class groups and the residual finiteness of hyperbolic groups}
\author{Henry Wilton\\ \emph{with an appendix by} Alessandro Sisto}
\newcommand{\Addresses}{{
  \bigskip
  \footnotesize

  \textsc{DPMMS, Centre for Mathematical Sciences, Wilberforce Road, Cambridge, CB3 0WB, UK}\par\nopagebreak
  \textit{E-mail address:} \texttt{h.wilton@maths.cam.ac.uk}\\[1ex]
  
  \textsc{Department of Mathematics, Heriot-Watt University and Maxwell Institute for Mathematical Sciences, Edinburgh, UK}\par\nopagebreak
\textit{E-mail address:} \texttt{a.sisto@hw.ac.uk}
}}
\begin{document}

\maketitle

\begin{abstract}
Assuming that every hyperbolic group is residually finite, we prove the congruence subgroup property for mapping class groups of hyperbolic surfaces of finite type. Under the same assumption, two consequences for profinite rigidity follow: profinitely equivalent hyperbolic 3-manifolds are commensurable; and fibred hyperbolic 3-manifolds are profinitely rigid if the monodromy is weakly asymmetric.
\end{abstract}

Our purpose is to relate a notorious open problem in geometric group theory to a different notorious problem in low-dimensional topology. The former is the problem of the residual finiteness of hyperbolic groups \cite[Question 1.15]{bestvina_questions_????}. 

\begin{introqu}[Residual finiteness of hyperbolic groups]
Is every word-hyperbolic group residually finite?
\end{introqu}
\noindent A remarkable positive result was proved by Agol, who answered the question in the affirmative for cubulable hyperbolic groups \cite{agol_virtual_2013}, building on results of Wise \cite{wise_structure_2021}. Nevertheless, in full generality, the question remains wide open.

The question in low-dimensional topology is known as the \emph{congruence subgroup property} for mapping class groups, which was conjectured to hold by Ivanov \cite{ivanov_fifteen_2006} (see also \cite[Problem 2.10]{kirby_problems_1995}, \cite[\S6]{lochak_results_2012}). A 2-dimensional surface $\Sigma$ is said to be \emph{of finite type} if $\Sigma$ is obtained by deleting finitely many points from a closed surface.

\begin{introconj}[Congruence subgroup property for mapping class groups]
Let $\Sigma$ be a connected, orientable, hyperbolic surface of finite type. Every subgroup of finite index in the mapping class group $\Mod(\Sigma)$ is a congruence subgroup.
\end{introconj}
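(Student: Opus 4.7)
The congruence subgroup property is equivalent to injectivity of the natural map
\[ \widehat{\Mod(\Sigma)} \to \Out(\widehat{\pi_1(\Sigma)}), \]
so I would aim to establish this injectivity by induction on the complexity $\xi(\Sigma) = 3g + n - 3$. The base cases (low-genus surfaces with few punctures) should be handled by classical results of Asada, so I would focus on the inductive step. The main tool is the Birman exact sequence
\[ 1 \to \pi_1(\Sigma) \to \Mod(\Sigma^*) \to \Mod(\Sigma) \to 1, \]
where $\Sigma^*$ is $\Sigma$ with an extra puncture. This fits into a commutative diagram over the analogous ``geometric'' sequence
\[ 1 \to \widehat{\pi_1(\Sigma)} \to \Out^*(\widehat{\pi_1(\Sigma^*)}) \to \Out(\widehat{\pi_1(\Sigma)}) \to 1 \]
of outer automorphism groups of profinite completions (preserving conjugacy classes of peripheral loops), which is known to be exact from work in anabelian geometry. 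A five-lemma argument then reduces CSP for $\Sigma^*$ to: (i) CSP for $\Sigma$ (the inductive hypothesis); (ii) the leftmost vertical map, which is essentially the identity on $\widehat{\pi_1(\Sigma)}$, being an isomorphism; and (iii) exactness of the top row after profinite completion.

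Right-exactness of the profinite top row is automatic, and middle-exactness follows from Grossman's residual finiteness of $\Mod(\Sigma)$, which implies that $\pi_1(\Sigma)$ is closed in the profinite topology on $\Mod(\Sigma^*)$. The genuine obstacle is \emph{left}-exactness of the profinite Birman sequence: one must show that $\widehat{\pi_1(\Sigma)}$ embeds in $\widehat{\Mod(\Sigma^*)}$. Equivalently, given any finite-index $K \leq \pi_1(\Sigma)$, one needs a finite-index $N \leq \Mod(\Sigma^*)$ with $N \cap \pi_1(\Sigma) \leq K$. This is a genuine separability statement about how $\pi_1(\Sigma)$ sits inside $\Mod(\Sigma^*)$ that does not follow from residual finiteness of either group individually.

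This is precisely the step where the assumed residual finiteness of hyperbolic groups must enter. I would attempt to produce the required finite quotients of $\Mod(\Sigma^*)$ by constructing a hyperbolic overgroup of $\pi_1(\Sigma)$ inside a suitable quotient of $\Mod(\Sigma^*)$---for instance, combining $\pi_1(\Sigma)$ with a convex-cocompact subgroup of $\Mod(\Sigma)$ generated by independent pseudo-Anosov elements and invoking a Bestvina--Feighn combination argument to keep the resulting extension hyperbolic---and then applying the hypothesis to separate the finite-index subgroup $K$ inside this extension. The main obstacle, and presumably the content of Sisto's appendix, is carrying out this geometric construction with enough control that arbitrary finite-index subgroups of $\pi_1(\Sigma)$ can be cut out by the restrictions of finite-index subgroups of the hyperbolic extension.
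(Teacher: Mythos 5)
Your strategy---inducting on complexity via a four/five-lemma chase around the profinite Birman sequence---is genuinely different from the paper's. The paper avoids any induction and any appeal to exactness of the bottom ``geometric'' sequence. Instead it shows directly that every finite-index subgroup $\Gamma\le\PMod_0(\Sigma)$ contains a Veech subgroup: fix a filling curve $a$, let $\alpha_*=\iota(a)\in\PMod_0(\Sigma_*)$ be its pseudo-Anosov point-push, form the malnormal family of conjugates $\phi_i\alpha_*\phi_i^{-1}$ over coset representatives of the preimage $\Gamma_*$, and apply \emph{omnipotence} (Theorem~\ref{thm: HHG theorem}) to get a finite quotient in which these conjugates have pairwise distinct prescribed orders. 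The resulting cover $\Lambda$ of $\Sigma$ then has $\PMod_0^\Lambda(\Sigma)\subseteq\Gamma$, because the order constraint isolates the trivial coset. So the Birman sequence is used as a device to manufacture pseudo-Anosovs and a cover, not as the spine of a diagram chase; and residual finiteness of hyperbolic groups enters through Dehn fillings of the whole hierarchically hyperbolic group $\Mod(\Sigma)$ (Sisto's appendix), not through separability inside a hyperbolic subgroup. The fine order control supplied by omnipotence is strictly more than the bare ``left-exactness'' you aim for, and it is what makes the Veech-subgroup argument close.

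Your proposal also has concrete gaps. First, the base cases of the induction are wrong: a Birman sequence always increases the number of punctures, so nothing arrives at a closed surface, and the base cases must be the closed surfaces $\Sigma_{g,0}$ for \emph{every} genus $g\ge 2$. Asada handles only genus one; Boggi covers genus two; genus $\ge 3$ closed is open, so the induction never gets started there. Second, the sketched proof of left-exactness does not produce what is needed. Extending $\pi_1(\Sigma)$ by a convex-cocompact $H\le\Mod(\Sigma)$ indeed gives a hyperbolic subgroup $\Gamma_H\le\Mod(\Sigma_*)$ (Farb--Mosher, Kent--Leininger), and residual finiteness then separates any finite-index $K\le\pi_1(\Sigma)$ inside $\Gamma_H$. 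But $\Gamma_H$ has infinite index in $\Mod(\Sigma_*)$, and left-exactness requires a finite-index $N\le\Mod(\Sigma_*)$ with $N\cap\pi_1(\Sigma)\le K$; a finite quotient of the subgroup $\Gamma_H$ does not extend to one of $\Mod(\Sigma_*)$. Bridging exactly this gap---producing hyperbolic quotients of the \emph{entire} mapping class group that are injective on a chosen convex-cocompact subgroup---is the content of the HHG Dehn-filling machinery the paper invokes. Finally, a small correction: middle-exactness of the top profinite row is automatic for any short exact sequence of groups (the closure of $\pi_1(\Sigma)$ in $\widehat{\Mod(\Sigma_*)}$ always equals the kernel of $\widehat{\Mod(\Sigma_*)}\to\widehat{\Mod(\Sigma)}$), so no appeal to Grossman is needed there.
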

Intuitively, the congruence subgroups of $\Mod(\Sigma)$ are those induced from the finite-sheeted covering spaces of $\Sigma$ -- see \S\ref{sec: CSP preliminaries} for a precise definition.  The congruence subgroup property for mapping class groups was proved in the case of surfaces of genus zero by Diaz, Donagi, and Harbater  \cite{diaz_every_1989}, surfaces of genus one by Asada \cite{asada_faithfulness_2001}, and surfaces of genus two by Boggi \cite{boggi_congruence_2009,boggi_congruence_2020}; see also \cite{ellenberg_arithmetic_2012,kent_congruence_2016,mcreynolds_congruence_2012}.

Our main theorem asserts that the conjecture is true, if every hyperbolic group is residually finite.

\begin{letterthm}\label{thm: Main theorem}
Let $\Sigma$ be a connected, orientable, hyperbolic surface of finite type. If every word-hyperbolic group is residually finite then every subgroup of finite index in $\Mod(\Sigma)$ is congruence.
\end{letterthm}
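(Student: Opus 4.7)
My plan is to prove the contrapositive: if the CSP fails for $\Sigma$, then some finitely generated word-hyperbolic group is not residually finite, contradicting the hypothesis. The first step is a standard profinite reformulation. The action of $\Mod(\Sigma)$ on $\pi_1(\Sigma)$ gives, via Dehn--Nielsen--Baer, an essentially injective map $\Mod(\Sigma) \to \Out(\pi_1(\Sigma))$, and congruence subgroups are precisely the preimages of open subgroups of $\Out(\widehat{\pi_1(\Sigma)})$. The CSP is therefore equivalent to the injectivity of the continuous extension
\[
\widehat{\rho} \colon \widehat{\Mod(\Sigma)} \longrightarrow \Out(\widehat{\pi_1(\Sigma)}),
\]
and the goal becomes showing that the \emph{congruence kernel} $C(\Sigma) := \ker \widehat{\rho}$ is trivial.

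Before attacking the general case, I would set up an inductive framework via the Birman exact sequence
\[
1 \to \pi_1(\Sigma_{g,n}) \to \Mod(\Sigma_{g,n+1}) \to \Mod(\Sigma_{g,n}) \to 1,
\]
which, thanks to the goodness of surface groups in Serre's sense, profinitely completes to a short exact sequence relating $C(\Sigma_{g,n})$ to $C(\Sigma_{g,n+1})$ and data about $\widehat{\pi_1(\Sigma_{g,n})}$. Combined with the known base cases (genus $0$, $1$, $2$), this should reduce the problem to a specific closed-surface situation, where a single geometric construction can be brought to bear.

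The heart of the argument is then to take a hypothetical nontrivial $\alpha \in C(\Sigma)$ and use it to manufacture a finitely generated word-hyperbolic group $G_\alpha$ containing an element which is nontrivial but killed in every finite quotient. The natural geometric input is Thurston's theorem that, for any pseudo-Anosov $\phi$, the mapping torus $M_\phi$ is a hyperbolic $3$-manifold and $\pi_1(M_\phi) = \pi_1(\Sigma)\rtimes_\phi \mathbb{Z}$ is word-hyperbolic. The plan would be to twist this construction using $\alpha$, for instance by approximating $\alpha$ by honest mapping classes and assembling an HNN extension or amalgamation of several such mapping-torus groups, with a defining relation that records a finite-level shadow of $\alpha$. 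Hyperbolicity of $G_\alpha$ would be enforced by a Bestvina--Feighn style combination argument along quasiconvex fibre subgroups, exploiting the freedom to choose pseudo-Anosovs with very distinct dynamics. The triviality of $\alpha$ in every finite quotient of $\widehat{\Mod(\Sigma)}$ would, by functoriality, force the corresponding element of $G_\alpha$ to die in every finite quotient of $G_\alpha$, producing the desired non-residually-finite hyperbolic group.

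The main obstacle I foresee is precisely this construction: realising an abstract element of the profinite kernel $C(\Sigma)$ as a concrete word-hyperbolic finitely generated group witnessing non-residual finiteness. The tension is between $\alpha$ living inertly in $\widehat{\Mod(\Sigma)}$ and the need for $G_\alpha$ to be an honest hyperbolic group with a concrete presentation; bridging this plausibly requires careful profinite approximation paired with hyperbolicity-preserving combination theorems, and I expect the bulk of the technical work to lie here.
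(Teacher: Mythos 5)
Your proposal takes a genuinely different tack from the paper, and it contains at least one concrete error and one gap that you yourself flag as unresolved.

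The paper does not argue by contrapositive. It assumes residual finiteness of hyperbolic groups as an input to a Dehn-filling machine: Sisto's appendix (Theorem~\ref{thm: HHG theorem}) uses the hierarchically hyperbolic Dehn-filling technology of Behrstock--Hagen--Martin--Sisto and Dahmani--Hagen--Sisto to prove that, under the hypothesis, every finite malnormal family of pseudo-Anosov cyclic subgroups in a finite-index subgroup of $\Mod(\Sigma)$ is \emph{omnipotent} in the sense of Wise. Given a finite-index $\Gamma\le\Mod(\Sigma)$, the paper then produces a finite quotient $f$ of $\pi_1(\Sigma)$ directly, by applying omnipotence to the conjugates (by coset representatives of $\Gamma_*$) of the point-pushing image $\iota(a)$ of a filling curve $a$ in $\PMod(\Sigma_*)$. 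The orders of the $f$-images of these conjugates detect membership in $\Gamma$, so the Veech subgroup of the corresponding cover is contained in $\Gamma$, which establishes the congruence subgroup property. The Birman exact sequence is used here precisely to pass from $\Mod(\Sigma)$ acting on curves to pseudo-Anosov elements of $\PMod(\Sigma_*)$ via Kra's theorem---not as an inductive device reducing $\Sigma_{g,n}$ to known base cases, which is what you propose.

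The gaps in your sketch: first, your appeal to a ``Bestvina--Feighn style combination argument along quasiconvex fibre subgroups'' cannot work as stated. The fibre subgroup $\pi_1(\Sigma)$ of a hyperbolic mapping-torus group $\pi_1(\Sigma)\rtimes_\phi\Z$ is normal of infinite index and exponentially distorted, hence \emph{not} quasiconvex; the combination theorems you invoke require quasiconvexity of the amalgamating subgroup. (Surface-by-free groups can be hyperbolic when the monodromies generate a convex-cocompact subgroup of $\Mod(\Sigma)$, but this is established by entirely different means.) Second, and more fundamentally, the central step---manufacturing a finitely presented, word-hyperbolic group $G_\alpha$ from a hypothetical nontrivial profinite element $\alpha\in C(\Sigma)$ such that some nontrivial element of $G_\alpha$ dies in every finite quotient---is precisely where you acknowledge you have no construction. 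That is not a detail to be filled in; it is the whole content of the theorem. The paper's solution to this difficulty is the omnipotence theorem proved via HHG Dehn filling, an idea that does not appear in your outline. As written, the proposal is not a proof.
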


Among many possible applications, the congruence subgroup property plays an important role in the field of \emph{profinite rigidity} -- the study of the extent to which a group is determined by its finite quotients. Bridson and Reid posed one of the principal open questions in that area.  Let $\widehat{G}$ denote the profinite completion of a group $G$.

\begin{introconj}[Bridson--Reid conjecture]
Let $G$ and $H$ be lattices in $PSL_2(\C)$. If $\widehat{G}\cong\widehat{H}$ then $G\cong H$.\footnote{In fact, Bridson and Reid make the much stronger conjecture that the same holds if $H$ is any finitely generated, residually finite group \cite[Conjecture 2.1]{bridson_profinite_2023}. That stronger conjecture will not concern us here.}
\end{introconj}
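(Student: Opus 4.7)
The strategy I would pursue combines Theorem~\ref{thm: Main theorem} with Agol's virtual fibering theorem. By Mostow rigidity, torsion-free lattices in $PSL_2(\C)$ are determined up to isomorphism by the underlying finite-volume hyperbolic 3-manifold, so the task reduces to extracting enough geometric information from $\widehat{G}$ to recover this manifold, at least up to commensurability. (The gap between commensurability and the isomorphism demanded by the conjecture is genuine, and I expect it to be the main obstacle.) After passing to finite-index subgroups, which is legitimate because a profinite isomorphism induces a bijection of open subgroups, I may assume both $G$ and $H$ are fundamental groups of fibered hyperbolic 3-manifolds, so $G = \pi_1(\Sigma_1) \rtimes_\phi \Z$ and $H = \pi_1(\Sigma_2) \rtimes_\psi \Z$ for pseudo-Anosov mapping classes $\phi$, $\psi$ on finite-type surfaces $\Sigma_i$.

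The profinite completion of such a mapping torus takes the form $\widehat{\pi_1(\Sigma)} \rtimes_{\hat\phi} \widehat{\Z}$, and I would first show that the fiber subgroup is intrinsically detectable from the ambient profinite group, for example via cohomological dimension or abelianisation data. Granting this, the given isomorphism $\widehat{G} \cong \widehat{H}$ yields an isomorphism $\widehat{\pi_1(\Sigma_1)} \cong \widehat{\pi_1(\Sigma_2)}$ intertwining the outer actions of $\hat\phi$ and $\hat\psi$ up to the natural action of $\Aut(\widehat{\Z})$. Since surfaces of finite type are distinguished up to homeomorphism by profinite invariants (genus and number of punctures), we may write $\Sigma_1 = \Sigma_2 =: \Sigma$; the upshot is that the images of $\phi$ and $\psi$ are conjugate in $\Out(\widehat{\pi_1(\Sigma)})$, possibly after inverting one of them.

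Theorem~\ref{thm: Main theorem} now enters decisively: under the residual finiteness hypothesis, the congruence kernel of the map $\widehat{\Mod(\Sigma)} \to \Out(\widehat{\pi_1(\Sigma)})$ is trivial, so the profinite conjugacy in the target lifts to one in $\widehat{\Mod(\Sigma)}$ itself. Invoking conjugacy separability of $\Mod(\Sigma)$ for pseudo-Anosov elements then descends this to an honest conjugacy $\phi \sim \psi^{\pm 1}$ in $\Mod(\Sigma)$, whence the mapping tori are homeomorphic and the finite-index subgroups produced above are isomorphic. By Mostow rigidity this already yields commensurability of the original lattices $G$ and $H$. Closing the last mile to full isomorphism—the precise conclusion of the conjecture—would require additional profinite invariants, perhaps recognition of the invariant trace field or of the cusp shapes, and this is where the argument as sketched remains incomplete and corresponds to the hard part of the problem.
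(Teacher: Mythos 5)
The statement you have been asked to prove is, in the paper, a \emph{conjecture}: it is stated and attributed to Bridson--Reid, but the paper neither proves it nor claims to. What the paper actually proves (as Corollary~\ref{cor: Profinite rigidity}) is the strictly weaker conclusion that, assuming every hyperbolic group is residually finite, profinitely equivalent Kleinian lattices are \emph{commensurable}. Its two-line proof passes to a fibred finite-index subgroup via Agol's theorem, observes that the corresponding finite-index subgroup of $H$ has the same profinite completion, and then invokes Theorem~\ref{thm: Main theorem} together with Cheetham-West's Theorem~1.5 to conclude cyclic commensurability. Your proposal follows the same arc in outline (virtual fibering, comparison of mapping tori, CSP as the decisive input) and --- to your credit --- you end by candidly conceding that you only reach commensurability. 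So the honest verdict is: you have not proved the statement, and neither has the paper; your sketch is an expanded version of the paper's proof of the weaker corollary.

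Two concrete gaps in the sketch are worth flagging, because they explain \emph{why} the argument stops short. First, when you transport the monodromy across the isomorphism $\widehat{G}\cong\widehat{H}$, the ambiguity is not ``possibly after inverting one of them'': an isomorphism of $\widehat{\pi_1(\Sigma)}\rtimes_{\hat\phi}\widehat\Z$ with $\widehat{\pi_1(\Sigma)}\rtimes_{\hat\psi}\widehat\Z$ identifies $\hat\phi$ with $\hat\psi^{u}$ for some $u\in\widehat\Z^{\times}=\Aut(\widehat\Z)$, which is an enormous group, not $\{\pm1\}$. This is precisely the obstruction that Liu's ``almost determined'' theorem \cite{liu_mapping_2023} leaves unresolved, and is why the correct output is commensurability rather than homeomorphism of mapping tori. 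Second, you appeal to ``conjugacy separability of $\Mod(\Sigma)$ for pseudo-Anosov elements,'' which is not an available theorem, and the CSP does not supply it --- the paper's Example~\ref{eg: GL(2,Z)} makes exactly this point, noting that $GL_k(\Z)$ for $k\ge 3$ has the CSP and is \emph{not} conjugacy separable. If anything, the CSP makes conjugacy separability harder, since it pins the profinite topology down to congruence quotients. The combination of these two issues is what Cheetham-West's result is engineered to handle, and is why the paper cites it rather than attempting the descent directly.
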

\noindent Liu proved that hyperbolic 3-manifolds are profinitely rigid \emph{up to finite ambiguity} \cite{liu_finite-volume_2023} (see also \cite{liu_mapping_2023}). The conjecture has been proved for certain examples, but remains open in general \cite{boileau_profinite_2020,bridson_absolute_2020,bridson_profinite_2020b,bridson_profinite_2017,cheetham-west_profinite_2024,cheetham-west_absolute_????}.

Combining Theorem \ref{thm: Main theorem} with recent developments in the study of profinite rigidity for Kleinian groups, we obtain two partial result towards the Bridson--Reid conjecture.  If every hyperbolic group is residually finite then, on the one hand, profinitely equivalent hyperbolic 3-manifolds are commensurable, and on the other hand, mapping tori of \emph{weakly asymmetric} pseudo-Anosovs are profinitely rigid.

\begin{letterthm}\label{thm: Profinite rigidity}
Suppose that every hyperbolic group is residually finite. Let $G$ and $H$ be lattices in $PSL_2(\C)$.
\begin{enumerate}[(i)]
\item If $\widehat{G}\cong\widehat{H}$ then $G$ and $H$ are commensurable.
\item If, in addition, $G$ is the fundamental group of a fibred hyperbolic 3-manifold $M_\phi$, where the monodromy $\phi$ is a weakly asymmetric pseudo-Anosov, then $G\cong H$.
\end{enumerate}
\end{letterthm}

A pseudo-Anosov mapping class of a surface $\Sigma$ is called (weakly) asymmetric if it does not descend along a proper (normal) covering map to an orbifold quotient of $\Sigma$ (see Definition \ref{def: Asymmetric pseudo-Anosov} below). Masai showed that, aside from low-complexity cases that admit a hyperelliptic involution, a random walk on $\Mod(\Sigma)$ lands on an asymmetric pseudo-Anosov with probability tending to 1 \cite[Theorem 1.3]{masai_fibered_2017}. Therefore, in a suitable sense, item (ii) of Theorem \ref{thm: Profinite rigidity} asserts that the Bridson--Reid conjecture holds for generic fibred 3-manifolds.

Since it is widely believed that there is a non-residually finite hyperbolic group, the sceptical reader may wonder how useful Theorem \ref{thm: Main theorem} is.  We would make two cases for it.

On the one hand, optimists may hope to upgrade Theorem \ref{thm: Main theorem} to an unconditional proof of the congruence subgroup property.  They may be inspired in this endeavour by a theorem of Agol--Groves--Manning \cite{agol_residual_2009}. Combined with Kahn--Markovic's surface subgroup theorem \cite{kahn_immersing_2012}, the Agol--Groves--Manning theorem gave a conditional proof of the virtual Haken conjecture, and their techniques played a central role in Agol's eventual unconditional proof \cite{agol_virtual_2013} (relying, of course, on the transformative work of Wise \cite{wise_structure_2021}). The techniques in question are the combinatorial Dehn filling results of Groves--Manning \cite{groves_dehn_2008} and Osin \cite{osin_peripheral_2007}, and combinatorial Dehn filling, further refined to the setting of hierachically hyperbolic groups, likewise plays a central role in the proofs of this paper. 

In any case, we emphasise that our proof of the congruence subgroup property does not rely on \emph{all} hyperbolic groups being residually finite. Rather, only needs residual finiteness for the quotients of mapping class groups that arise in the proof of Theorem \ref{thm: HHG theorem}.

On the other hand, pessimists may prefer to try to use Theorem \ref{thm: Main theorem} to show that there is a non-residually finite hyperbolic group. Examples of non-homeomorphic but profinitely equivalent $\mathrm{Sol}$-manifolds \cite{funar_torus_2013} and Seifert-fibred spaces \cite{hempel_some_2014,wilkes_profinite_2017c} are known. Such pessimists may hope to develop hyperbolic analogues of these constructions, thereby disproving the Bridson--Reid conjecture. If the counterexamples are strong enough to contradict the conclusions of Theorem \ref{thm: Profinite rigidity}, then it would follow that a non-residually finite hyperbolic group exists. Such constructions would not disprove the congruence subgroup property, because  the residual finiteness of hyperbolic groups is used a second time in the proof of Theorem \ref{thm: Profinite rigidity}.

\subsection*{Sketch proof}

The proof of Theorem \ref{thm: Main theorem} uses Wise's notion of \emph{omnipotence}. For an independent set of infinite-order elements $\{\gamma_1,\ldots,\gamma_n\}$ in a group $\Gamma$, omnipotence says that we may find a homomorphism $f$ from $\Gamma$ to a finite group in which we may control the orders $o(f(\gamma_i))$; in particular, we may ensure that the orders are all distinct (see Definition \ref{def: Omnipotence} for the exact statement). Wise observed that, if every hyperbolic group is residually finite, then every finite independent set of elements of infinite order in every hyperbolic group is omnipotent.

In the appendix, Sisto proves the crucial technical Theorem \ref{thm: HHG theorem}, which gives the analogous statement for independent sets of pseudo-Anosovs in finite-index subgroups of mapping class groups. The proof builds on recent developments in the theory of Dehn fillings of hierarchically hyperbolic groups due to Behrstock--Hagen--Martin--Sisto \cite{behrstock_combinatorial_2024}, Dahmani \cite{dahmani_normal_2018}  and Dahmani--Hagen--Sisto \cite{dahmani_dehn_2021}. Although hierarchically hyperbolic groups are not mentioned in the statement of Theorem \ref{thm: HHG theorem}, it seems that the hierarchically hyperbolic framework developed in \cite{behrstock_hierarchically_2017} \emph{et seq.}\ is crucial for implementing the proof.

With Theorem \ref{thm: HHG theorem} in hand, the remainder of the proof of Theorem \ref{thm: Main theorem} is a short algebraic argument. However, it admits a geometric interpretation, and we take the opportunity to outline the geometric version of the argument here.

Fix a filling curve $a$ (defined up to isotopy) on $\Sigma$ and consider $\Mod(\Sigma).a$, its orbit under the action of $\Mod(\Sigma)$. The stabiliser of a filling curve is finite and so, after passing to a torsion-free congruence subgroup, the stabiliser of $a$ may be assumed to be trivial. For a finite-sheeted normal covering space $\Lambda$ of $\Sigma$ and a closed curve $c$ on $\Sigma$,  let $\deg_\Lambda(c)$ denote the degree with which $c$ unwraps in $\Lambda$. Let $\Gamma$ be a finite-index subgroup of $\Mod(\Sigma)$.   By applying Theorem \ref{thm: HHG theorem} to the (pure) mapping class group of the surface $\Sigma_*$ obtained by adding a puncture to $\Sigma$, we obtain a finite-sheeted normal covering space $\Lambda$ of $\Sigma$ with the the property that, for any $c\in\Mod(\Sigma).a$, $\deg_\Lambda(c)=\deg_\Lambda(a)$ if and only if $c\in\Gamma.a$.

To complete the proof, we need to show that the Veech subgroup $\Mod^\Lambda(\Sigma)$ of mapping classes that lift to $\Lambda$ is contained in $\Gamma$. Indeed, fix any elevation $\hat{a}$ of $a$ to $\Lambda$, and let $\phi\in\Mod^\Lambda(\Sigma)$ lift to $\hat{\phi}\in\Mod(\Lambda)$. Now $\hat{\phi}(\hat{a})$ is certainly an elevation of $\phi(a)\in\Mod(\Sigma).a$. However, since $\hat{\phi}$ is a lift of $\phi$, $\hat{\phi}(\hat{a})$ unwraps $\phi(a)$ with the same degree as $\hat{a}$ unwraps $a$, so
\[
\deg_\Lambda(\phi(a))=\deg_\Lambda(a)\,.
\]
Thus, by the defining property of $\Lambda$, $\phi(a)\in\Gamma.a$. Since the stabiliser $a$ was assumed to be trivial, it follows that $\phi\in\Gamma$, as required.

\subsection*{Outline}

The paper is structured as follows. In \S\ref{sec: CSP preliminaries} we record basic facts about the congruence subgroup property, including its interpretation in terms of Veech subgroups. Section \ref{sec: Omnipotence} develops Wise's notion of omnipotence, including stating the crucial Theorem \ref{thm: HHG theorem}.  Section \ref{sec: Level-k subgroups} collects some standard facts about the level-$k$ congruence subgroups of mapping class groups. In particular, the level-3 subgroup provides a convenient torsion-free congruence subgroup. Section \ref{sec: Proof of the main theorem} contains the proof of Theorem \ref{thm: Main theorem}; this is the only genuine novelty of the body of the paper. Section \ref{sec: Profinite rigidity} contains the proof of Theorem \ref{thm: Profinite rigidity}. It follows the outline of the main theorem of \cite{bridson_profinite_2017}, using recent developments of Liu \cite{liu_finite-volume_2023} and Cheetham-West \cite{cheetham-west_profinite_2024}.

Finally, in the appendix, Sisto proves Theorem \ref{thm: HHG theorem}, building on his recent work with Behrstock, Hagen and Martin \cite{behrstock_combinatorial_2024}.

\subsection*{Acknowledgements}

Thanks to Ian Agol, Marco Boggi, Tamunonye Cheetham-West and Dan Margalit for comments on an earlier draft.

\section{Preliminaries on the congruence subgroup property}\label{sec: CSP preliminaries}

In this section we collect useful basic facts about the congruence subgroup property. No originality is claimed: the material is all standard. Throughout, $G$ is a finitely generated group.

\begin{definition}[Congruence subgroups]
Consider a subgroup $\Delta\leq\Out(G)$. Let $Q$ be a finite, characteristic quotient of $G$. The kernel of the natural map
\[
\Delta\into\Out(G)\to\Out(Q)
\]
is called a \emph{principal congruence subgroup} of $\Delta$. A subgroup of $\Delta$ that contains a principal congruence subgroup is called a \emph{congruence subgroup}, and a quotient of $\Delta$ by a normal congruence subgroup is called a \emph{congruence quotient}. If every subgroup of finite index in $\Delta$ is congruence, then $\Delta$ is said to have the \emph{congruence subgroup property}.
\end{definition}

Note the slight abuse of terminology: whether or not $\Delta$ enjoys the congruence subgroup property depends on $G$, as the next example shows.

\begin{example}[$GL_2(\Z)$]\label{eg: GL(2,Z)}
Recall that
\[
\Out(\Z^2)\cong GL_2(\Z)\cong\Out(F_2)\,,
\]
where $F_2$ is the free group of rank 2. Asada proved that $\Out(F_2)$ has the congruence subgroup property \cite{asada_faithfulness_2001}. On the other hand, it is a classical fact, known to Fricke and Klein, that $\Out(\Z^2)$ does not have the congruence subgroup property \cite{raghunathan_congruence_2004}. We will outline a quick proof using the results of Stebe.

For each $n>0$, let $\eta_n:GL_2(\Z)\to GL_2(\Z/n\Z)$ be the natural homomorphism, and note that these form a cofinal family among the congruence quotients of $\Out(\Z^2)$. Stebe exhibited non-conjugate matrices $A,B\in GL_2(\Z)$ such that $\eta_n(A)$ and $\eta_n(B)$ are conjugate for every $n$ \cite[Example 1]{stebe_conjugacy_1972b}\footnote{The existence of such pairs is a standard consequence of class field theory.}.  In the same paper, Stebe showed that $GL_2(\Z)$ is conjugacy separable \cite[Theorem 1]{stebe_conjugacy_1972b}, meaning that there is some homomorphism $q$ to a finite group in which $q(A)$ and $q(B)$ are not conjugate. Therefore, $q$ does not factor through any $\eta_n$, so $\Out(\Z^2)$ does not have the congruence subgroup property.

This stands in contrast to the higher-rank case: Bass--Milnor--Serre  proved that $GL_k(\mathbb{Z})\cong\Out(\Z^n)$ does have the congruence subgroup property when $k\geq 3$ \cite{bass_solution_1967}.  By a similar argument, it follows that $GL_k(\mathbb{Z})$ is \emph{not} conjugacy separable for $k\geq 3$.
\end{example}

It is useful to note that congruence subgroups are closed under passing to intersections.

\begin{remark}\label{rem: Intersection of congruence subgroups}
For $K_1$ and $K_2$ both characteristic subgroups of finite index in a finitely generated group $G$, let $L$ be the intersection of all subgroups of index $|G:K_1\cap K_2|$ in $G$. Then $L$ is characteristic in $K_1\cap K_2$ and both natural homomorphisms $\Out(G)\to \Out(G/K_i)$ factor through $\Out(G)\to \Out(G/L)$. Therefore, a finite intersection of congruence subgroups is itself congruence.
\end{remark}

Rather than working directly with principal congruence subgroups, we will approach the congruence subgroup property from a slightly different point of view via \emph{Veech subgroups}, following \cite{ellenberg_arithmetic_2012}.

\begin{definition}[Veech subgroup]\label{def: Veech subgroup}
If $H$ is a normal subgroup of finite index in $G$ then
\[
\Delta_H:=\{\phi\in\Delta \mid \phi(H)=H\}
\]
is called a \emph{Veech subgroup}. (Note that, because $H$ is normal in $G$, $H$ is preserved by all inner automorphisms, so the assertion that $\phi(H)=H$ is independent of the choice of automorphism representing $\phi$.)
\end{definition}

Every Veech subgroup is a congruence subgroup, and hence one can prove the congruence subgroup property by showing that every subgroup of finite index contains a Veech subgroup.

\begin{lemma}\label{lem: Veech subgroups are congruence}
Every Veech subgroup is a congruence subgroup.
\end{lemma}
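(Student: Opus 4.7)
My plan is to show that, for any normal finite-index $H\trianglelefteq G$, the Veech subgroup $\Delta_H$ contains the principal congruence subgroup associated to a suitable finite characteristic quotient of $G$. The quotient I have in mind is $Q:=G/K$, where $K$ is the intersection of all subgroups of $G$ of index at most $[G:H]$. Since $G$ is finitely generated, there are only finitely many such subgroups, so $K$ has finite index in $G$; by construction $K$ is characteristic in $G$ and $K\leq H$. Set $\bar H:=H/K$, which is normal in $Q$ because $H$ is normal in $G$.

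Now let $N:=\ker(\Delta\to\Out(Q))$ be the corresponding principal congruence subgroup of $\Delta$, and I would show that $N\subseteq\Delta_H$. Given $\phi\in N$, choose any representative $\tilde\phi\in\Aut(G)$. Because $K$ is characteristic, $\tilde\phi(K)=K$, so $\tilde\phi$ descends to $\bar\phi\in\Aut(Q)$; by assumption the class of $\bar\phi$ in $\Out(Q)$ is trivial, so $\bar\phi$ is an inner automorphism of $Q$. Inner automorphisms of $Q$ preserve every normal subgroup, hence $\bar\phi(\bar H)=\bar H$. Pulling back along the projection $\pi:G\to Q$ yields
\[
\tilde\phi(H)=\tilde\phi(\pi^{-1}(\bar H))=\pi^{-1}(\bar\phi(\bar H))=\pi^{-1}(\bar H)=H,
\]
so $\phi\in\Delta_H$, as required.

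I do not anticipate a substantial obstacle; the argument is a short exercise in taking characteristic cores. The only point that needs care is that the condition ``$\phi(H)=H$'' is independent of the choice of representative $\tilde\phi\in\Aut(G)$, and this is precisely why Definition \ref{def: Veech subgroup} requires $H$ to be normal in $G$: inner automorphisms of $G$ then automatically preserve $H$, so the condition descends to $\Out(G)$.
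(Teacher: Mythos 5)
Your proof is correct and is essentially the same as the paper's: both take a characteristic core $K\leq H$ of finite index (the paper uses the intersection of all subgroups of index exactly $|G:H|$, you use index at most $[G:H]$, which works equally well), observe that a class in the principal congruence subgroup for $G/K$ descends to an inner automorphism of $G/K$, and then use that inner automorphisms preserve the normal subgroup $H/K$ to conclude $\phi(H)=H$.
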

\begin{proof}
Let $H$ be a normal subgroup of finite index in $G$, and consider the corresponding Veech subgroup. Let $K$ be a characteristic subgroup of $G$ contained in $H$ (such as the intersection of all subgroups of index $|G:H|$).   Suppose that $\phi\in\Out(G)$ is in the principal congruence subgroup corresponding to $K$. By definition, any representative automorphism for $\phi$ descends to an inner automorphism $\bar{\phi}$ of $G/K$ and, in particular, $\bar{\phi}(H/K)=H/K$, so $\phi(H)=H$. This proves that the principal congruence subgroup corresponding to $K$ is contained in the Veech subgroup of $H$, as required.
\end{proof}

Thus, the congruence subgroup property follows if every subgroup of finite index contains a Veech subgroup.\footnote{In the case of free and surface groups, Lemma \ref{lem: Veech subgroups are congruence} has a converse: every congruence subgroup contains a finite intersection of Veech subgroups \cite{boggi_personal_2024}.} It is also useful to notice that elements of Veech subgroups descend to outer automorphism groups of quotients.

\begin{remark}\label{rem: Veech subgroups descend}
Let $H$ be a normal subgroup of finite index in $G$. Then $\phi\in\Out(G)$ descends to an outer automorphism $\bar{\phi}$ of $G/H$ if and only if $\phi$ is contained in the Veech subgroup $\Delta_H$.
\end{remark}

In this paper, we will be concerned with the setting where $\Sigma$ is a connected, orientable, hyperbolic surface of finite type, and $\Delta$ is the mapping class group $\Mod(\Sigma)$. A standard generalisation of the Dehn--Nielsen--Baer theorem asserts that the natural map $\Mod(\Sigma)\to\Out(\pi_1(\Sigma))$ is injective \cite[Theorem 8.8]{farb_primer_2012}, so the congruence subgroup property for $\Mod(\Sigma)$ makes sense.

We will write $\Mod^\Lambda(\Sigma)$ for the Veech subgroup corresponding to a finite-sheeted normal covering space $\Lambda\to\Sigma$; it consists of the mapping classes in $\Sigma$ that lift to $\Lambda$. Moreover, it will be technically convenient to replace $\Mod(\Sigma)$ by $\Mod_0(\Sigma)$, a torsion-free subgroup of finite index in the pure mapping class group of $\Sigma$, and we will set
\[
\Mod_0^\Lambda(\Sigma):=\Mod_0(\Lambda)\cap\Mod^\Lambda(\Sigma)\,,
\]
the Veech subgroup of $\Mod_0(\Sigma)$ corresponding to $\Lambda$.

\section{Omnipotence}\label{sec: Omnipotence}

In order to prove the congruence subgroup property, given a finite-index subgroup $\Gamma$ of $\Mod(\Sigma)$, we need to construct a certain normal subgroup of finite index in $\pi_1(\Sigma)$. Our route to this will be via a property called \emph{omnipotence}, which makes sense for \emph{independent} subsets.

\begin{definition}[Independent sets]
A finite set $\{\gamma_i\}$ of non-trivial elements in a group $G$ is called \emph{independent} if, whenever there are non-zero integers $m$ and $n$ such that $\gamma_i^m$ is conjugate to $\gamma_j^n$, it follows that $i=j$.
\end{definition}

In practice, independent subsets usually arise from almost malnormal families of virtually cyclic subgroups.

\begin{definition}[Almost malnormal]\label{def: Malnormal family}
A family of subgroups $\{P_i\}$ of a group $G$ is called \emph{almost malnormal} if
\[
|gP_ig^{-1}\cap P_j|<\infty
\]
implies that $i=j$ and $g\in P_i$. If 
\[
|gP_ig^{-1}\cap P_j|=1
\]
implies that $i=j$ and $g\in P_i$ then $\{P_i\}$ is said to be \emph{malnormal}. (Note that the two notions coincide if $G$ is torsion-free.) A subgroup $P$ is said to be (almost) malnormal whenever the singleton family $\{P\}$ is.
\end{definition}

\begin{remark}\label{rem: Almost malnormality and independence}
Suppose that $\{P_i\}$ is an almost malnormal family of infinite subgroups and, for each $i$, $\langle\gamma_i\rangle$ has finite index in $P_i$. Then $\{\gamma_i\}$ is independent.
\end{remark}

A malnormal family naturally pulls back to a malnormal family in a normal subgroup of finite index.\footnote{With a little more care, pullbacks can also be defined for subgroups that are not normal, nor of finite index. However, we will not need that extra subtlety here.}

\begin{remark}\label{rem: Pullback of a malnormal family}
Let $\curlyP=\{P_i\mid i\in I\}$ be a malnormal family of subgroups of a group $G$ and let $H$ be a normal subgroup of finite index in $G$. For each $i$, let $n_i=|P_i:P_i\cap H|$, and fix a set of coset representatives $\{p_{ij}\mid 1\leq j\leq n_i\}$ for $P_i\cap H$ in $P_i$. Then
\[
\curlyP^H:=\{p_{ij}(P_i\cap H)p_{ij}^{-1}\mid i\in I\,,\,1\leq j\leq n_{i}\}
\]
is a malnormal family in $H$. Note furthermore that $\curlyP^H$ is well defined up to conjugation in $H$.
\end{remark}

It is well known that maximal cyclic subgroups in torsion-free hyperbolic groups (for instance, hyperbolic surface groups) are malnormal. We will be interested in sets of pseudo-Anosovs in subgroups of mapping class groups. In this case, almost malnormality is a consequence of Bowditch's theorem that the action on the curve complex is acylindrical \cite{bowditch_tight_2008}. The following result appears in a paper of Dahmani--Guirardel--Osin  \cite{dahmani_hyperbolically_2017}, although it is implicit in an earlier preprint of McCarthy \cite{mccarthy_normalizers_1982}.

\begin{proposition}\label{prop: Elementary closure of a pseudo-Anosov}
Let $\Sigma$ be an orientable, hyperbolic surface of finite type. Any pseudo-Anosov $\phi\in\Mod(\Sigma)$ is contained in a unique maximal virtually cyclic subgroup $E(\phi)$, and $E(\phi)$ is almost malnormal.
\end{proposition}
\begin{proof}
The existence of $E(\phi)$ is \cite[Theorem 2.19(a)]{dahmani_hyperbolically_2017}, while the almost malnormality follows from \cite[Proposition 2.10]{dahmani_hyperbolically_2017}.
\end{proof}

The maximal virtually cyclic subgroup $E(\phi)$ is called the \emph{elementary closure} of $\phi$.

Omnipotence was introduced by Wise \cite{wise_subgroup_2000}, who framed it as a property of a group $G$. However, we will think of it as a property of an independent subset.

\begin{definition}[Omnipotence]\label{def: Omnipotence}
An independent subset $\{\gamma_i\}$ of a group $G$ is called \emph{omnipotent} if the following holds: there is a positive integer $K\in \Z_{>0}$ such that, for every choice of positive integers $\{n_i\}$, there is a homomorphism $f$ from $G$ to a finite group such that the order of $f(\gamma_i)$ equals $Kn_i$.
\end{definition}

Wise observed that, if every hyperbolic group is residually finite, then every independent finite set of infinite-order elements in every hyperbolic group is omnipotent.  He then confirmed this directly for free groups \cite[Theorem 3.5]{wise_subgroup_2000}.

\begin{theorem}[Wise]\label{thm: Omnipotence of free groups}
Any finite, independent subset of a free group is omnipotent.
\end{theorem}

The corresponding result for surface groups was proved by Bajpai \cite{bajpai_omnipotence_2007} (see also \cite{wilton_virtual_2010}). Via a much deeper argument using the machinery of special cube complexes, Wise extended Theorem \ref{thm: Omnipotence of free groups} to all virtually special hyperbolic groups \cite[Theorem 16.6]{wise_structure_2021}. The key ingredient in Wise's proof is his malnormal special quotient theorem \cite[Theorem 12.2]{wise_structure_2021}, which enables one to Dehn fill while preserving residual finiteness.

Recent advances in our understanding of Dehn fillings of hierarchically hyperbolic groups \cite{behrstock_combinatorial_2024,dahmani_normal_2018,dahmani_dehn_2021} make it possible to prove a similar result for pseudo-Anosovs in finite-index subgroups of mapping class groups, assuming that every hyperbolic group is residually finite. The following theorem is proved by Sisto in Appendix \ref{app: Omnipotence via HHGs}.

\begin{theorem}[Sisto]\label{thm: HHG theorem}
Let $\Sigma$ be a connected, orientable, hyperbolic surface of finite type and let $\Gamma$ be a subgroup of finite index in $\Mod(\Sigma)$. Suppose that every hyperbolic group is residually finite.  Then every finite, independent set of pseudo-Anosovs $\{\phi_i\}$ in $\Gamma$ is omnipotent.
\end{theorem}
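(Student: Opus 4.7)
The plan is to combine the hierarchically hyperbolic Dehn filling machinery of Dahmani--Hagen--Sisto (building on work of Behrstock--Hagen--Martin--Sisto and Dahmani) with the residual finiteness hypothesis. The omnipotence constant $K$ will be taken to be the injectivity threshold provided by the HHG Dehn filling theorem, chosen so that, for any exponents divisible by $K$, filling $\Gamma$ along powers of the $\phi_i$ is ``clean'' in the sense that the image of $\phi_i$ has order exactly equal to the filling exponent.

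Given positive integers $n_1,\ldots,n_k$, set $m_i = Kn_i$ and form the quotient
\[
\bar\Gamma := \Gamma \big/ \langle\!\langle \phi_1^{m_1},\ldots,\phi_k^{m_k}\rangle\!\rangle.
\]
By the HHG Dehn filling theorem, $\bar\Gamma$ carries a hierarchically hyperbolic structure, and each image $\bar\phi_i$ has order exactly $m_i$; the malnormality hypothesis on $\{\langle\phi_i\rangle\}$ is precisely what allows the small cancellation / rotating family machinery of Dahmani--Guirardel--Osin to be applied to yield a Greendlinger-type lemma controlling these orders. Since the $\phi_i$ are pseudo-Anosov -- that is, loxodromic on the curve graph, which is the top-level hyperbolic space in the HHG structure on $\Mod(\Sigma)$ -- filling high powers of the $\phi_i$ collapses this top level, and further technical work should arrange that $\bar\Gamma$ is in fact a hyperbolic group.

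With $\bar\Gamma$ hyperbolic, the standing hypothesis on hyperbolic groups supplies that $\bar\Gamma$ is residually finite. For each $i$ and each proper divisor $d$ of $m_i$, residual finiteness gives a finite quotient in which the image of $\bar\phi_i^d$ remains nontrivial; taking the diagonal of these finitely many quotients yields a single finite quotient $Q$ of $\bar\Gamma$ in which the order of the image of $\bar\phi_i$ is still exactly $m_i$ for every $i$. The composition
\[
f\colon \Gamma \twoheadrightarrow \bar\Gamma \twoheadrightarrow Q
\]
is then a homomorphism to a finite group with $o(f(\phi_i)) = Kn_i$ for each $i$, verifying omnipotence.

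The principal obstacle is the step asserting that $\bar\Gamma$ is hyperbolic, rather than merely hierarchically hyperbolic: the residual finiteness hypothesis of the theorem is available only for hyperbolic groups, but in general HHG Dehn fillings produce HHG quotients with nontrivial subsurface pieces surviving. The argument must exploit the specific position of pseudo-Anosov filling subgroups at the top of the HHG hierarchy, together with a careful analysis of the surviving subsurface projections after filling, to confirm that the quotient ends up genuinely hyperbolic -- perhaps via an inductive scheme over subsurface complexity, or by combining the filling with a passage to a further quotient that kills residual lower-level contributions. A secondary point, addressed by the rotating family framework, is ensuring that the bound $m_i$ on the order of $\bar\phi_i$ is tight rather than just an upper bound; malnormality of the family $\{\langle\phi_i\rangle\}$ is precisely the hypothesis under which this machinery produces sharp order control.
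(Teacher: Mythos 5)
Your proposal correctly identifies that HHG Dehn filling and residual finiteness of hyperbolic groups must be combined, but the step you flag as the ``principal obstacle'' --- getting a genuinely hyperbolic quotient rather than merely an HHG one --- is exactly where the argument breaks down, and the fix you sketch (inductive scheme over subsurface complexity, or further quotients) is not how the actual proof resolves it. Filling $\Gamma$ along $\langle\!\langle \phi_1^{m_1},\ldots,\phi_k^{m_k}\rangle\!\rangle$ produces an HHG quotient with nontrivial lower-level subsurface pieces surviving, and the residual finiteness hypothesis is useless for such a quotient. There is no known way to ``continue filling'' to kill those lower levels while retaining quantitative control over the order of each $\bar\phi_i$.

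The paper's proof instead splits the task in two, and the order of the two steps is crucial. First, one applies the Behrstock--Hagen--Martin--Sisto machinery to quotient the full mapping class group $G=\Mod(\Sigma)$ (not just $\Gamma$) by a normal subgroup contained in $\Gamma$, obtaining a quotient $\pi\colon G\to\bar G$ that is already an honest hyperbolic group --- this uses that in the BHMS construction all the HHS factor spaces except the top one become uniformly bounded. Critically, this quotient is chosen so that a free convex-cocompact subgroup $Q$ generated by conjugates of powers of the $\phi_i$ injects, so each $\bar\phi_i = \pi(\phi_i)$ still has \emph{infinite} order in $\bar G$. No prescription of finite orders happens at this stage. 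One then proves, using the geodesic-quadrangle-lifting properties of $\pi$, that $\{\langle\bar\phi_i\rangle\}$ remains malnormal in $\bar\Gamma = \pi(\Gamma)$, which is a finite-index subgroup of $\bar G$ and hence hyperbolic. Only now does one apply classical \emph{relatively hyperbolic} Dehn filling (Osin, Groves--Manning) to $\bar\Gamma$ relative to the $\langle\bar\phi_i\rangle$ to obtain a further hyperbolic quotient $\bar{\bar\Gamma}$ in which $\bar\phi_i$ has order exactly $Kn_i$; residual finiteness of $\bar{\bar\Gamma}$ then finishes exactly as in your last paragraph. So your residual-finiteness bookkeeping at the end is fine, but the heart of the matter --- separating the passage to a hyperbolic quotient from the prescription of finite orders, and the malnormality-transfer argument in between --- is missing from your proposal.
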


\begin{remark}\label{rem: Omnipotence and finite-index}
Omnipotence does not naively pass to subgroups of finite index.  Therefore, Theorem \ref{thm: HHG theorem} cannot be replaced with the simpler statement that finite independent sets of pseudo-Anosovs in mapping class groups are omnipotent.
\end{remark}

Theorem \ref{thm: HHG theorem} is the only place that the assumption that hyperbolic groups are residually finite is used in the proof of Theorem \ref{thm: Main theorem}.  Therefore, an unconditional proof of Theorem \ref{thm: HHG theorem} would give an unconditional proof of the congruence subgroup property for mapping class groups.

At this point, it is natural to hope that mapping class groups could be virtually special. If so, then perhaps a version of Wise's techniques could be applied to prove Theorem \ref{thm: HHG theorem} unconditionally. Unfortunately, this approach fails: Kapovich--Leeb showed that mapping class groups of surfaces of genus at least 3 cannot act properly on CAT(0) spaces \cite[Theorem 4.2]{kapovich_actions_1996}.

\section{Level-\texorpdfstring{$k$}{k} subgroups}\label{sec: Level-k subgroups}

In this section we collect some standard results about explicit subgroups of finite index in mapping class groups. In particular, we will see that the level-3 subgroup provides a convenient torsion-free congruence subgroup of the pure mapping class group.

Let $\Sigma$ be an orientable surface of finite type. For any integer $k$, the natural action of $\Mod(\Sigma)$ on $H_1(\Sigma,\Z/k\Z)$ defines a homomorphism from $\Mod(\Sigma)$ to the finite linear group $GL(b_1(\Sigma),k)$. The \emph{level-$k$} mapping class group $\Mod(\Sigma)[k]$ is defined to be the kernel of this homomorphism. Since $H_1(\Sigma,\Z/k\Z)$ is a characteristic quotient of $\pi_1(\Sigma)$, $\Mod(\Sigma)[k]$ is congruence. Furthermore, Serre observed that the level-$k$ subgroups are torsion free, unless $k=1,2$ \cite[Theorem 6.9]{farb_primer_2012}.

\begin{proposition} \label{prop: Torsion-free subgroups}
For $k\geq 3$, $\Mod(\Sigma)[k]$ is torsion-free.
\end{proposition}

The level-$k$ subgroups provide explicit torsion-free congruence subgroups, but the existence of such subgroups can also be deduced from much more general facts. Indeed, Grossman proved that $\Mod(\Sigma)$ is residually finite, and the quotients that her argument produces are congruence by construction \cite{grossman_residual_1974}. Since mapping class groups have finitely many conjugacy classes of torsion elements \cite[Theorem 7.14]{farb_primer_2012}, it follows that there is a torsion-free congruence subgroup.

We next turn to another useful subgroup of finite index. The action of $\Mod(\Sigma)$ on the $n$ punctures of $\Sigma$ defines a natural homomorphism to the symmetric group $\Mod(\Sigma)\to S_n$; the kernel is the \emph{pure mapping class group} $\PMod(\Sigma)$. The author is grateful to Dan Margalit for drawing his attention to the next lemma, which explains the relationship between level-$k$ subgroups and pure mapping class groups.

\begin{lemma}\label{lem: Level subgroups and pure subgroups}
Let $\Sigma$ be an orientable surface of finite type. If $k\geq 2$ then $\Mod(\Sigma)[k]\leq\PMod(\Sigma)$.
\end{lemma}
\begin{proof}
A collection of simple closed curves around the $n$ punctures span an $(n-1)$-dimensional summand $M$ of $H_1(\Sigma,\Z/k\Z)$. Note that $M$ is naturally a $\Mod(\Sigma)$-submodule, and the $\Mod(\Sigma)$ action descends to an $S_n$-action, making $M$ into an $S_n$-module. By direct calculation, the regular $S_n$-module over $\Z/k\Z$ is the direct sum of $M$ with the trivial module. It follows that $M$ is a faithful $S_n$-module. Thus, any mapping class that acts trivially on $H_1(\Sigma,\Z/k\Z)$ also acts trivially on the punctures, which implies the result.
\end{proof}

It follows from Lemma \ref{lem: Level subgroups and pure subgroups} that $\PMod(\Sigma)$ is congruence. We finish the section with an alternative proof of this fact, using Theorem \ref{thm: Omnipotence of free groups}. Although not needed for the other reuslts of the paper, the proof illustrates how omnipotence can be used to construct congruence subgroups.

\begin{proposition}\label{prop: Pure mapping class groups are congruence}
Let $\Sigma$ be an orientable, hyperbolic surface of finite type. Then $\PMod(\Sigma)$ is congruence in $\Mod(\Sigma)$.
\end{proposition}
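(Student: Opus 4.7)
The plan is to reduce $\PMod(\Sigma)$ to a Veech subgroup and invoke Lemma \ref{lem: Veech subgroups are congruence}. If $\Sigma$ is closed there is nothing to prove, so we may assume that $\Sigma$ has $n\geq 1$ punctures; in that case $\pi_1(\Sigma)=F$ is free and each puncture corresponds to a conjugacy class of primitive peripheral element $\gamma_i\in F$. The family $\{\langle\gamma_1\rangle,\ldots,\langle\gamma_n\rangle\}$ is a finite malnormal collection of maximal infinite cyclic subgroups (the peripheral subgroups correspond to disjoint cusps of the hyperbolic structure, hence are malnormal).

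Next, I would apply Wise's omnipotence theorem for free groups (Theorem \ref{thm: Omnipotence of free groups}) to this family. Let $K$ be the constant supplied by omnipotence and choose positive integers $n_1,\ldots,n_n$ so that the products $Kn_1,\ldots,Kn_n$ are pairwise distinct (for instance, let the $n_i$ be distinct primes exceeding $K$). Omnipotence then yields a homomorphism $f\colon F\to Q$ to a finite group with $o(f(\gamma_i))=Kn_i$ for each $i$. Set $H:=\ker f$, which is a normal subgroup of finite index in $F=\pi_1(\Sigma)$, and consider the associated Veech subgroup $\Delta_H\leq\Mod(\Sigma)$.

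The key claim is that $\Delta_H\subseteq\PMod(\Sigma)$. Every $\phi\in\Mod(\Sigma)$ permutes the conjugacy classes $[\gamma_1],\ldots,[\gamma_n]$ according to the permutation $\sigma\in S_n$ that $\phi$ induces on the punctures, so being in $\PMod(\Sigma)$ is exactly the statement that $\sigma$ is trivial. If $\phi\in\Delta_H$, then, by Remark \ref{rem: Veech subgroups descend}, $\phi$ descends to an outer automorphism $\bar{\phi}$ of $F/H\leq Q$. Conjugacy classes in a group are preserved by outer automorphisms, and the element $f(\gamma_i)$ has order $Kn_i$, so the induced permutation of $\{f(\gamma_i)\}$ (up to conjugacy) preserves orders. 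Since the orders $Kn_i$ are distinct by construction, $\sigma$ must be the identity, proving $\phi\in\PMod(\Sigma)$.

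Finally, $\Delta_H$ is a congruence subgroup by Lemma \ref{lem: Veech subgroups are congruence}, and it is contained in $\PMod(\Sigma)$, so $\PMod(\Sigma)$ contains a principal congruence subgroup and is itself congruence. The main obstacle is really the appeal to Wise's omnipotence theorem; the rest is a bookkeeping argument about how outer automorphisms act on peripheral conjugacy classes. One mild subtlety to verify carefully is that the peripheral cyclic subgroups in a punctured hyperbolic surface form a genuinely malnormal family of infinite cyclic subgroups (so that Theorem \ref{thm: Omnipotence of free groups} applies), which is standard once one passes to the hyperbolic picture.
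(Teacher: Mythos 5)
Your proof is correct and takes essentially the same approach as the paper: both form the malnormal family of peripheral cyclic subgroups around the punctures, apply Wise's omnipotence theorem for free groups to produce a finite quotient in which the peripheral elements have pairwise distinct orders, and then observe that any mapping class in the resulting Veech subgroup must induce the trivial permutation of punctures because the induced outer automorphism of the finite quotient preserves orders of conjugacy classes. The only cosmetic difference is that the paper takes $n_i=i$ while you take distinct primes exceeding $K$; as the paper notes in a footnote, all that matters is that the resulting orders are pairwise distinct.
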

\begin{proof}
Let $\{x_1,\ldots,x_n\}$ denote the punctures of $\Sigma$ and, for each $i$, let  $\alpha_i$ be a based loop winding once around  $x_i$, representing an element of $\pi_1(\Sigma)$.  Because $\Sigma$ is hyperbolic, the family
\[
\{ \alpha_1,\ldots,\alpha_n\}
\]
is independent. Therefore, by Theorem \ref{thm: Omnipotence of free groups}, there is a homomorphim $f$ from $\pi_1(\Sigma)$ to a finite group $Q$ such that $o(f(\alpha_i))=Ki$ for each $i$.\footnote{In fact, we just need that the orders $o(f(\alpha_i))$ are pairwise distinct. In this particular case, rather than appealing to Theorem \ref{thm: Omnipotence of free groups}, it is not difficult to achieve this by a direct construction.}  Let $\Lambda$ be the normal covering space of $\Sigma$ corresponding to the kernel of $f$. It remains to prove that the Veech subgroup  $\Mod^\Lambda(\Sigma)$ is contained in $\PMod(\Sigma)$.

Consider $\phi\in\Mod^\Lambda(\Sigma)$. This means that any choice of representative automorphism $\phi_*\in\Aut(\pi_1(\Sigma))$ preserves $\pi_1(\Lambda)$, and so descends to an automorphism $\bar{\phi}_*$ of $Q$.

Suppose that $\phi(x_i)=x_j$. Then the loop $\phi\circ\alpha_i$ is freely homotopic to $\alpha_j$, so $f(\alpha_j)$ is conjugate to $f(\phi_*(\alpha_i))=\bar{\phi}_*(f(\alpha_i))$. Since $\bar{\phi}_*$ is an automorphism of $Q$, it follows that
\[
Kj=o(f(\alpha_j))=o(f(\alpha_i))=Ki\,,
\]
whence $i=j$. That is, $\phi(x_i)=x_i$ for all $i$, so $\phi\in\PMod(\Sigma)$ as required.
\end{proof}

\section{Proof of the congruence subgroup property}\label{sec: Proof of the main theorem}

As well as Theorem \ref{thm: HHG theorem}, the proof of Theorem \ref{thm: Main theorem} relies on the Birman exact sequence \cite{birman_mapping_1969} (see also \cite[Theorem 4.6]{farb_primer_2012}). Let $\Sigma$ be an orientable, hyperbolic surface of finite type and let $\Sigma_*$ be the result of adding a single puncture. The Birman exact sequence is
\[
1\to\pi_1(\Sigma)\stackrel{\iota}{\to} \PMod(\Sigma_*)\to\PMod(\Sigma)\to 1
\] 
where $\iota$ is the \emph{point-pushing map} and the \emph{forgetful map} $\PMod(\Sigma_*)\to\PMod(\Sigma)$ forgets the puncture. Algebraically, these maps can be understood via the assertion that the natural diagram
\begin{center}
  \begin{tikzcd}
1\arrow{r}&\pi_1(\Sigma)\arrow{r}{\iota}\arrow{d}{=} &\PMod(\Sigma_*)\arrow{r}\arrow{d} &  \PMod(\Sigma)\arrow{d}\arrow{r}&1 \\
1\arrow{r}&\pi_1(\Sigma)\arrow{r} & \Aut(\pi_1\Sigma) \arrow{r}& \Out(\pi_1\Sigma)\arrow{r}&1 
  \end{tikzcd}
\end{center}
commutes (cf.\ \cite[p.\ 235]{farb_primer_2012}). When $\Sigma$ is hyperbolic, the vertical arrows are injective. For a mapping class $\phi\in\PMod(\Sigma)$, a choice of preimage will be denoted by $\phi_*$. Note that $\phi_*$ is naturally an automorphism of $\pi_1(\Sigma)$, while $\phi$ is the corresponding \emph{outer} automorphism.\footnote{On a few occasions, when we need to consider sets in $\Mod(\Sigma_*)$ indexed by subscripts, we will abandon this convention and just use $\phi_i$, say, to denote an element of $\Mod(\Sigma_*)$. However, on these occasions we will make no reference to the corresponding outer automorphism, so no confusion should be caused.}

\begin{remark}\label{rem: Action on point-pushing subgroup}
The commutativity of the above diagram enables us to compute the action of elements of $\phi_*\in\PMod(\Sigma_*)$ on the point-pushing subgroup $\iota(\pi_1(\Sigma))$. Specifically,
\[
\iota(\phi_*(g))=\phi_*\iota(g)\phi_*^{-1}
\]
for any $g\in \pi_1(\Sigma)$ and any $\phi_*\in\PMod(\Sigma_*)\leq\Aut(\pi_1(\Sigma))$.
\end{remark}

The next theorem gives a conditional proof of the congruence subgroup property.

\begin{theorem}\label{thm: Main result in the torsion-free case}
Let $\Sigma$ be a connected, orientable, hyperbolic surface of finite type, and let $\Sigma_*$ be the surface obtained by adding one puncture. Suppose that the following is true: for every subgroup $\Gamma_*$ of finite index in $\Mod(\Sigma_*)$, every finite, independent set of pseudo-Anosovs $\{\phi_i\}$  in $\Gamma_*$ is omnipotent. Then $\Mod(\Sigma)$ has the congruence subgroup property.
\end{theorem}
\begin{proof}
Let $\Mod_0(\Sigma)=\Mod(\Sigma)[3]$, which is contained in $\PMod(\Sigma)$ by Lemma \ref{lem: Level subgroups and pure subgroups}. By Remark \ref{rem: Intersection of congruence subgroups}, it suffices to prove that  $\Mod_0(\Sigma)$ has the congruence subgroup property.

Consider $\Gamma$ a subgroup of finite index in $\Mod_0(\Sigma)$. By replacing $\Gamma$ with the intersection of its conjugates, we may assume that $\Gamma$ is normal. Let $\Mod_0(\Sigma_*)$ be the preimage of $\Mod_0(\Sigma)$ under the forgetful map, and let $\Gamma_*$ be the preimage of $\Gamma$.  Let $\{1=\phi_1,\phi_2,\ldots,\phi_k\}$ be a set of coset representatives for $\Gamma_*$ in $\Mod_0(\Sigma_*)$, where $k=|\Mod_0(\Sigma_*):\Gamma_*|=|\Mod_0(\Sigma):\Gamma|$.

Fix a filling curve $a$ on $\Sigma$ that does not represent a proper power in $\pi_1(\Sigma)$.  Without loss of generality, $a$ is based at the added puncture of $\Sigma_*$ and so defines an element of $\pi_1(\Sigma)$. By Kra's theorem \cite{kra_nielsen-thurston-bers_1981}, the image $\alpha_*=i(a)$ under the point-pushing map is a pseudo-Anosov element of $\Mod_0(\Sigma_*)$.

We claim that $\langle \alpha_*\rangle$ is malnormal in $\Mod_0(\Sigma_*)$.  To this end, consider $\phi_*\in \Mod_0(\Sigma_*)$ such that $\phi_* \alpha_*^m\phi_*^{-1}=\alpha_*^n$, with $m,n\neq 0$.   Consider the elementary closure $E(\alpha_*)$, and let $E_0(\alpha_*)=E(\alpha_*)\cap \Mod_0(\Sigma_*)$. By Proposition \ref{prop: Elementary closure of a pseudo-Anosov}, $E(\alpha_*)$ is almost malnormal in $\Mod(\Sigma_*)$, so $\phi_*\in E_0(\alpha_*)$.  Since $\alpha_*$ is in the point-pushing subgroup, the image of $E(\alpha_*)$ in $\Mod(\Sigma)$ is finite. Hence, the image of $E_0(\alpha_*)$  is trivial, because $\Mod_0(\Sigma)$ is torsion-free by Proposition \ref{prop: Torsion-free subgroups}.  Therefore, $\phi_*$ is in the image of the point-pushing map, and so $\phi_*\in\langle\alpha_*\rangle$ as required, by the malnormality of maximal cyclic subgroups of $\pi_1(\Sigma)$.

Write $\alpha_i=\phi_i\alpha_*\phi_i^{-1}$ for $1\leq i\leq k$. By Remark \ref{rem: Pullback of a malnormal family}, $\{\langle \alpha_i\rangle\mid 1\leq i\leq k\}$ is a malnormal family of cyclic subgroups in $\Gamma_*$. In particular, $\{\alpha_i\mid 1\leq i\leq k\}$ is an independent set of pseudo-Anosovs in $\Gamma_*$, so is omnipotent by assumption. Let $K$ be the corresponding constant.

Omnipotence provides a homomorphism to a finite group $f:\Gamma_*\to Q$ such that the order $o(f(\alpha_1))=K$ while $o(f(\alpha_i))=2K$ for $i>1$. Let $\Lambda$ be the finite-sheeted normal covering space of $\Sigma$ corresponding to the kernel of $f\circ\iota$. It remains to prove that $\Mod^\Lambda_0(\Sigma)\subseteq\Gamma$.

To this end, consider $\psi\in\Mod^\Lambda_0(\Sigma)$ with a choice of preimage $\psi_*$ under the forgetful map. At the level of fundamental groups, this means that  $\psi_*$ preserves $\pi_1(\Lambda)$, and so descends to an automorphism $\bar{\psi}_*$ of the finite group $\pi_1(\Sigma)/\pi_1(\Lambda)\cong f\circ\iota(\pi_1(\Sigma))\leq Q$. Therefore,
\begin{equation}\label{eqn: Order of alpha1} 
o(f\circ \iota(\psi_*(a)))=o(\bar{\psi}_*(f\circ\iota(a)))=o(f(\alpha_1))=K\,.
\end{equation}
On the other hand, because the $\phi_i$ are coset representatives for $\Gamma_*$, $\psi_*=\gamma_*\phi_i$ for some $i$ and some $\gamma_*\in\Gamma_*$. Therefore,
\[
\iota(\psi_*(a))=\psi_*\alpha_*\psi_*^{-1}=\gamma_*\phi_i\alpha_*\phi_i^{-1}\gamma_*^{-1}=\gamma_*\alpha_i\gamma_*^{-1}
\]
by Remark \ref{rem: Action on point-pushing subgroup}, so
\begin{equation}\label{eqn: Order of alphai} 
o(f\circ\iota(\psi_*(a)))=o(f(\gamma_*\alpha_i\gamma_*^{-1}))=o(f(\alpha_i))\,.
\end{equation}
Putting (\ref{eqn: Order of alpha1}) and (\ref{eqn: Order of alphai}) together, we get $o(f(\alpha_i))=K$ which implies, by the choice of $f$, that $i=1$. Since $\phi_1=1$ it follows that $\psi_*=\gamma_*\in\Gamma_*$, so $\psi\in\Gamma$. This proves that $\Mod^\Lambda_0(\Sigma)\subseteq\Gamma$.

In summary, $\Gamma$ contains a Veech subgroup, so $\Gamma$ is a congruence subgroup by Lemma \ref{lem: Veech subgroups are congruence}, as required.
\end{proof}

Theorem \ref{thm: Main theorem} now follows by combining Theorems \ref{thm: HHG theorem} and \ref{thm: Main result in the torsion-free case}

\section{Profinite rigidity}\label{sec: Profinite rigidity}

The purpose of this section is to prove Theorem \ref{thm: Profinite rigidity}, about profinite rigidity of finite-volume Kleinian groups, again assuming that every hyperbolic group is residually finite.

The proof is by now fairly standard. It is an elaboration of the author's proof with Bridson and Reid of profinite rigidity for punctured torus bundles over the circle \cite{bridson_profinite_2017}, and is close to the recent work of Cheetham-West \cite{cheetham-west_profinite_2024}. Nevertheless, since the exact statement could be of use in proving the existence of a non-residually finite group, we give some details here.

The starting point is a deep theorem of Liu \cite{liu_finite-volume_2023}, which in turn builds on earlier work of Liu \cite{liu_mapping_2023} and Jaikin-Zapirain \cite{jaikin-zapirain_recognition_2020}.  Given two hyperbolic 3-manifolds $M$ and $N$ that are profinitely equivalent in the sense that $\widehat{\pi_1(M)}\cong\widehat{\pi_1(N)}$, Liu explains how to translate a fibration of $M$ to a corresponding fibration of $N$ \cite[Theorem 6.1, Corollary 6.2]{liu_finite-volume_2023}.

In a recent preprint, Cheetham-West shows that, under Liu's correspondence, the profinite completion determines the topological type of the fibre surface. The following statement follows by combining \cite[Theorem 6.1, Corollary 6.2]{liu_finite-volume_2023} with \cite[Proposition 4.1]{cheetham-west_profinite_2024} (see also \cite[Theorem 1.4]{cheetham-west_profinite_2024}).

\begin{theorem}[Liu, Cheetham-West]\label{thm: Liu's theorem}
Let $\phi$ be a pseudo-Anosov mapping class of an orientable hyperbolic $\Sigma$ of finite type. Let $M_\phi$ be the mapping torus, and suppose that $\widehat{\pi_1(M_\phi)}\cong \widehat{\Gamma}$ for some Kleinian group $\Gamma$. Then $\Gamma\cong\pi_1(M_\psi)$ for some pseudo-Anosov $\psi\in\Mod(\Sigma)$. Furthermore, the images of $\phi$ and $\psi$ have equal orders in every congruence quotient of $\Mod(\Sigma)$.
\end{theorem}

Assuming the residual finiteness of hyperbolic groups, the next proposition complements the conclusions of Theorem \ref{thm: Liu's theorem}.

\begin{proposition}\label{prop: Orders of pseudo-Anosovs}
Suppose that all hyperbolic groups are residually finite. Let $\Sigma$ be an orientable hyperbolic surface of finite type and let $\phi,\psi$ be pseudo-Anosovs on $\Sigma$. Either there is a positive integer $n$ such that $\phi^n$ is conjugate to $\psi^{\pm n}$ in $\Mod(\Sigma)$, or there is a congruence quotient of $\Mod(\Sigma)$ in which the images of $\phi$ and $\psi$ have different orders.
\end{proposition}
\begin{proof}
By Theorem \ref{thm: Main theorem}, if all hyperbolic groups are residually finite then every finite quotient of $\Mod(\Sigma)$ is congruence. Therefore, it suffices to exhibit any finite quotient of $\Mod(\Sigma)$ in which the images of $\phi$ and $\psi$ have different orders.

Suppose first that the set $\{\phi,\psi\}$ is independent in $\Mod(\Sigma)$. By Theorem \ref{thm: HHG theorem}, there is positive integer $K$ and a homomorphism $\eta$ from $\Mod(\Sigma)$ to a finite group such that $o(\eta(\phi))=K$ but  $o(\eta(\psi))=2K$, and the result follows.

Otherwise, $\phi^m$ is conjugate to $\psi^{\pm n}$ for some positive integers $m,n$. Applying Theorem \ref{thm: HHG theorem} to the singleton $\{\phi\}$, there is a homomorphism $\eta$ from $\Mod(\Sigma)$ to a finite group such that $o(\eta(\phi))=Kmn$, for some positive integer $K$. In particular, $o(\eta(\phi))=Kn$ while $o(\eta(\phi))=Km$, so the result follows unless $m=n$, as required.
\end{proof}

At this point, we can see that profinitely equivalent mapping tori must be cyclically commensurable. If the monodromy is \emph{asymmetric} then cyclic commensurability can be upgraded to homeomorphism.

\begin{definition}[Asymmetric pseudo-Anosovs]\label{def: Asymmetric pseudo-Anosov}
A pseudo-Anosov $\phi$ on an orientable hyperbolic surface $\Sigma$ of finite type is \emph{symmetric} if there is proper finite-sheeted covering of orbifolds $\Sigma\to\curlyO$ such that $\phi$ descends to $\curlyO$; otherwise, $\phi$ is called \emph{asymmetric}. If there is such a normal covering (i.e.\ if $\curlyO$ is the quotient of $\Sigma$ by a finite group of isometries), then $\phi$ is called \emph{strongly symmetric}; otherwise, $\phi$ is called \emph{weakly asymmetric}.
\end{definition}

Weak asymmetry has a convenient algebraic characterisation in terms of the elementary closure.

\begin{remark}\label{rem: Algebraic characterisation of being weakly asymmetric}
Let $\Sigma$ be an orientable hyperbolic surface of finite type. A pseudo-Anosov $\phi\in\Mod(\Sigma)$ is weakly asymmetric if and only if the maximal virtually cyclic subgroup that contains it, $E(\phi)$, is either cyclic or dihedral. Indeed, $E(\phi)$ has a unique maximal finite subgroup $K$, and $E(\phi)/K$ is either cyclic or dihedral. By Nielsen realisation, $K$ can be realised by isometries of $\Sigma$ \cite{kerckhoff_nielsen_1983}. If $K$ is non-trivial then $\phi$ descends to the quotient orbifold $\curlyO=K\backslash \Sigma$, so is strongly symmetric. Conversely, if $\phi$ descends to a proper regular quotient orbifold $\curlyO=K\backslash \Sigma$ then $\phi$ normalises the natural copy of $K$ inside $\Mod(\Sigma)$, so $K\langle\phi\rangle\leq E(\phi)$, and $E(\phi)$ is not cyclic or dihedral.
\end{remark}

We are now ready to prove Theorem \ref{thm: Profinite rigidity}.

\begin{proof}[Proof of Theorem \ref{thm: Profinite rigidity}]
We first prove part (i). By Agol's virtual fibring theorem \cite{agol_virtual_2013}, after passing to finite-index subgroups of $G$ and $H$, we may assume that $G$ is a the fundamental group of a mapping torus $M_\phi$, for some pseudo-Anosov mapping class $\phi$ of a surface $\Sigma$ of finite type. By Theorem \ref{thm: Liu's theorem}, $H$ is the fundamental group of a mapping torus $M_\psi$ for some pseudo-Anosov $\psi$ on $\Sigma$, and furthermore the images of $\phi$ and $\psi$ have the same order in every congruence quotient. Assuming every hyperbolic group is residually finite, Proposition \ref{prop: Orders of pseudo-Anosovs} implies that $\phi$ and $\psi$ have conjugate powers. Hence $M_\phi$ and $M_\psi$ are (cyclically) commensurable, as required.

For the second part, suppose that $G=\pi_1(M_\phi)$ for weakly asymmetric $\phi$. As above, $H=\pi_1(M_\psi)$ with $\psi\in E(\phi)$, and $\phi^n=\psi^{\pm n}$ for some positive $n$. Because $\phi$ is weakly asymmetric, $E(\phi)$ is infinite cyclic of infinite dihedral. But infinite-order elements of $\Z$ or $D_\infty$ have unique roots, so $\phi=\psi^{\pm 1}$, and the result follows.
\end{proof}

\appendix

\section{Omnipotence in finite-index subgroups of mapping class groups by Alessandro Sisto}\label{app: Omnipotence via HHGs}

The purpose of this appendix is to prove Theorem \ref{thm: HHG theorem}. We will use results from \cite{behrstock_combinatorial_2024}, which we summarise in the following.

\begin{theorem}
\label{thm:extract}
 Let $\Sigma$ be an orientable, hyperbolic surface of finite type and let $\Gamma$ be a subgroup of finite index in $\Mod(\Sigma)$. Let $Q$ be a convex-cocompact subgroup of $G=\Mod(\Sigma)$. If all hyperbolic groups are residually finite then there exists a quotient $\pi:G\to\bar G$ with the following properties:
 \begin{enumerate}
   \item\label{item:in_Gamma} The kernel of $\phi$ is contained in $\Gamma$.
 \item\label{item:hyp} $\bar G$ is hyperbolic.
    \item\label{item:inj} $\phi|_Q$ is injective.

  For the next items, let $X$ be the curve graph of $\Sigma$ and let $\bar X$ be its quotient by the kernel of $\phi$, with quotient map $\psi:X\to\bar X$; note that $\bar G$ naturally acts on $\bar X$. Fix a basepoint $x_0\in X$, with image $\bar x_0$ in $\bar X$.

  \item\label{item:cobound_isom} For all $q\in Q$ we have that $\psi|_{[x_0,qx_0]}$ is an isometric embedding.

  \item\label{item:quad_lift} For every geodesic quadrangle $\bar\gamma_1\ast\dots\ast \bar\gamma_4$ in $\bar X$ there exists a geodesic quadrangle $\gamma_1\ast\dots\ast \gamma_4$ in $X$ with $\psi(\gamma_i)=\bar\gamma_i$.

  \item\label{item:lift_cobound} Moreover, in the setting of the previous item, if a $\bar\gamma_i$ is a translate of a geodesic of the form $\psi([x_0,qx_0])$ for some $q\in Q$, then the corresponding $\gamma_i$ is a translate of $[x_0,qx_0]$.
 \end{enumerate}

\end{theorem}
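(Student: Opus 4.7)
The plan is to derive Theorem \ref{thm:extract} from the combinatorial Dehn filling framework for hierarchically hyperbolic groups developed in \cite{behrstock_combinatorial_????}, applied to the HHG structure on $G=\Mod(\Sigma)$ whose top-level hyperbolic space is (a model for) the curve graph $X$. The guiding observation is that convex-cocompactness of $Q$ translates into quasiconvexity of the orbit $Qx_0$ in $X$, so each geodesic $[x_0,qx_0]$ lies uniformly close to $Qx_0$ and has well-controlled subsurface projections. I would build $\bar G$ as a two-step quotient of $G$: first a combinatorial Dehn filling tailored so that $Q$ is left untouched and $Qx_0$ survives intact, then, if necessary, a further finite refinement supplied by the residual finiteness hypothesis.

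For the filling itself, I would choose a finite malnormal family $\{\langle g_i\rangle\}$ of pseudo-Anosov subgroups lying deep inside $\Gamma$ whose quasi-axes in $X$ are very far, in the projection-complex sense of \cite{behrstock_combinatorial_????}, from $Qx_0$. The existence of such a family uses the abundance of pseudo-Anosov elements in any finite-index subgroup of $\Mod(\Sigma)$ together with the quasiconvexity of $Qx_0$. Dehn filling along sufficiently high powers $\langle g_i^N\rangle$ produces an HHG quotient $G\to G'$ whose kernel is contained in $\Gamma$ and on which $Q$ embeds. Under the standing assumption that all hyperbolic groups are residually finite, the HHG Dehn filling machinery upgrades $G'$ (possibly after a further residually finite quotient) to a hyperbolic group $\bar G$, giving items (\ref{item:in_Gamma})--(\ref{item:inj}), with $\psi:X\to\bar X$ the induced map on curve-graph models.

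Items (\ref{item:cobound_isom})--(\ref{item:lift_cobound}) are where the combinatorial structure of the filling does the heavy lifting. For (\ref{item:cobound_isom}), the geodesic $[x_0,qx_0]$ was arranged to have bounded projection to every conjugate of a filled subgroup, so no two points on it are identified in $\bar X$; hence $\psi|_{[x_0,qx_0]}$ is an isometric embedding. For (\ref{item:quad_lift}), the lifting of geodesic quadrangles is an instance of the ``extraction'' result of \cite{behrstock_combinatorial_????}: geodesics in $\bar X$ are modelled on geodesics in $X$ that thread between the filled pieces, and any bounded-complexity configuration (here a quadrangle) therefore lifts. Item (\ref{item:lift_cobound}) follows by uniqueness of such lifts together with the injectivity of $\pi|_Q$: a side of the form $\psi([x_0,qx_0])$ can only lift to a translate of $[x_0,qx_0]$, since any competing lift would force an identification forbidden by (\ref{item:inj}).

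The main obstacle, as I see it, is choosing the filling data so that three competing demands are met simultaneously: the filled subgroups must lie deep enough inside $\Gamma$ to yield (\ref{item:in_Gamma}); they must satisfy the malnormality and separation hypotheses needed to invoke the HHG Dehn filling theorem and conclude hyperbolicity of $\bar G$; and they must be ``invisible'' to $Qx_0$, in the precise quantitative sense required by the projection-complex estimates, so that the geometric conclusions (\ref{item:cobound_isom})--(\ref{item:lift_cobound}) survive. Balancing these constraints, and in particular verifying the quantitative separation between the filling subgroups and $Q$ that underlies the isometric-embedding and quadrangle-lifting properties, is the technical core of the argument.
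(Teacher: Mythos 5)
Your proposal differs substantially from the paper's actual proof, and it contains a genuine gap in the way hyperbolicity of the quotient is supposed to be obtained.

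The paper's proof is essentially a direct invocation of the machinery of Behrstock--Hagen--Martin--Sisto: one feeds in an arbitrary finite subset $F$, the integer $i$ equal to the complexity of $\Sigma$ minus one, and the given convex-cocompact subgroup $Q$, and then reads off the conclusions from \cite[Theorem 7.1]{behrstock_combinatorial_????} and \cite[Proposition 8.13]{behrstock_combinatorial_????}. Item \ref{item:in_Gamma} is arranged by replacing the fixed finite-index subgroup from \cite[Lemma 8.1]{behrstock_combinatorial_????} by its intersection with $\Gamma$; hyperbolicity of $\bar G$ holds because the quotient carries an HHS structure in which every hyperbolic space except the top one is uniformly bounded, so the distance formula makes $\bar G$ quasi-isometric to that single hyperbolic space; items \ref{item:cobound_isom}--\ref{item:lift_cobound} are \cite[Lemma 8.44]{behrstock_combinatorial_????} and the two halves of \cite[Proposition 8.13-(VI)]{behrstock_combinatorial_????}.

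The gap in your sketch is the filling data. You propose to Dehn fill along a malnormal family $\{\langle g_i^N\rangle\}$ of pseudo-Anosov cyclic subgroups lying deep inside $\Gamma$ and far from $Qx_0$, and to conclude hyperbolicity of the resulting quotient (possibly after a further residually finite quotient). Filling pseudo-Anosov cyclic subgroups does nothing to the lower levels of the HHS hierarchy: all the subsurface subgroups of $\Mod(\Sigma)$ survive unchanged, and the quotient is not hyperbolic. To make the quotient hyperbolic one must fill at \emph{every} non-maximal level of the hierarchy (roughly, kill large powers of Dehn-twist-type subgroups on all proper subsurfaces), which is precisely what the iterated construction $\bar G_1,\dots,\bar G_i$ of \cite{behrstock_combinatorial_????} does; this is also why the choice $i = \textrm{complexity}-1$ matters. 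Relatedly, the residual-finiteness hypothesis is not a final ``clean-up'' step converting an HHG quotient into a hyperbolic one — a further residually finite quotient of a non-hyperbolic HHG has no reason to be hyperbolic. Instead, residual finiteness is used internally, at each stage of the level-by-level filling, to guarantee that the required deep quotients exist. Once this architectural point is fixed, the remaining items are genuinely the content of the BHMS propositions rather than something one should try to re-derive by hand; in particular your justification of item \ref{item:lift_cobound} via ``uniqueness of lifts'' together with injectivity of $\pi|_Q$ is not a correct argument, since lifts of geodesics to $X$ are not unique in general.
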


\begin{proof}
 The main output of the construction of \cite{behrstock_combinatorial_2024} is given in \cite[Theorem 7.1]{behrstock_combinatorial_2024}, with additional properties stated in \cite[Proposition 8.13]{behrstock_combinatorial_2024}. Note in particular that \cite[Proposition 8.13-(IV)]{behrstock_combinatorial_2024} stipulates that all kernels of the quotient maps described by the statement are contained in a fixed finite-index subgroup of $G$, coming from \cite[Lemma 8.1]{behrstock_combinatorial_2024}. For our purposes, we can and will further assume that said subgroup is contained in $\Gamma$, which can be arranged simply by intersecting the subgroup given by \cite[Lemma 8.1]{behrstock_combinatorial_2024} with $\Gamma$. This guarantees that item \ref{item:in_Gamma} holds.

 We now have to choose specific input for the construction, the required data being a finite subset $F$ of $G$, an integer $i$, and a convex-cocompact subgroup $Q$. The finite subset plays no role for us and can be chosen arbitrarily, we set $i$ to be the complexity of $\Sigma$ minus 1, and we take $Q$ to be our given convex-cocompact subgroup.

 The quotient $\bar G=\bar G_i$ given by \cite[Theorem 7.1]{behrstock_combinatorial_2024} is hyperbolic (item \ref{item:hyp}) because $\bar G$ has an HHS structure where all hyperbolic spaces involved are bounded (with uniform diameter bound) except for one; this is \cite[Theorem 7.1(II)]{behrstock_combinatorial_2024}. By the distance formula for HHSs, $\bar G$ is quasi-isometric to said hyperbolic space, so that it is hyperbolic. (One can also use the same argument as in \cite[Corollary 7.7]{behrstock_combinatorial_2024}, which is slightly more complicated than necessary). Moreover, by \cite[Theorem 7.1-(III)]{behrstock_combinatorial_2024}, $Q$ embeds in $\bar G$, which is our item \ref{item:inj}.

 Item \ref{item:cobound_isom} is Lemma \cite[Lemma 8.44]{behrstock_combinatorial_2024}, while item \ref{item:quad_lift} is given by the first part of \cite[Proposition 8.13-(VI)]{behrstock_combinatorial_2024}. Finally, the ``moreover'' part of \cite[Proposition 8.13-(VI)]{behrstock_combinatorial_2024} gives item \ref{item:lift_cobound}.
 \end{proof}

\begin{proof}[Proof of Theorem \ref{thm: HHG theorem}]
 Applying Theorem \ref{thm:extract} with $Q$ any free convex-cocompact subgroup freely generated by conjugates of powers of the $\phi_i$, we obtain a hyperbolic quotient $\pi:G\to\bar G$ of $G=\Mod(\Sigma)$ with the property that all $\bar\phi_i=\pi(\phi_i)$ are infinite-order elements.

 \par\medskip

 {\bf Claim.} We have $E(\bar\phi_i)=\pi(E(\phi_i))$ for all $i$. Moreover, $\{\bar\phi_i\}$ is independent.


 \begin{proof}[Proof of Claim]
 We use the same notation for the curve graph $X$ and its quotient $\bar X$ as in Theorem \ref{thm:extract}. Note that the inclusion $\pi(E(\phi_i))<E(\bar\phi_i)$ is clear from the fact that $\pi(E(\phi_i))$ is virtually cyclic and contains $\bar\phi_i$.

Independence of $\{\phi_i\}$ together with the fact that the $\phi_i$ are WPD \cite{bestvina_bounded_2002} for the action on $X$ yield that for all $C\geq 0$ there exists $N\geq 0$ such that the following holds. If $\gamma\in \Gamma$ is such that for some integers $k,l$ with $|k|,|l|>N$ we have $d_X(x_0,\gamma x_0),d_X(\phi_i^kx_0,\gamma\phi_j^lx_0)\leq C$ for some $\phi_i$ and $\phi_j$, then $i=j$ and $\gamma\in E(\phi_i)$. (That is to say, orbits of distinct cosets of the $\phi_i$ cannot fellow-travel for an arbitrarily long distance, except for cosets within the same $E(\phi_i)$.)

 Suppose now that we have $\bar\gamma\in\bar\Gamma$ such that $\bar\gamma\bar\phi_i^k\bar\gamma^{-1}=\bar\phi_i^l$ for some $\bar\phi_i,\bar\phi_j$ and integers $k,l\neq 0$. We have to show $i=j$ and $\bar\gamma\in \pi(E(\phi_i))$. For the $N$ given by the property stated above with $C=d_{\bar X}(\bar x_0,g\bar x_0)$, we can assume $|k|,|l|>N$. We now have a geodesic quadrangle $\bar{\mathcal Q}$ in $\bar X$ with vertices $\bar x_0$, $\bar\gamma\bar x_0$, $\bar \gamma\bar\phi_j^l\bar x_0$, $\bar\phi_i^k\bar x_0$, which has two sides of length $C$. Moreover, by Theorem \ref{thm:extract}-\ref{item:cobound_isom} we can take the sides $[\bar x_0,\bar\phi_i^k\bar x_0]$ and $[\bar\gamma\bar x_0,\bar \gamma\bar\phi_j^l\bar x_0]$ to be images in $\bar X$ of translates of geodesics $[x_0,\phi_i^kx_0]$ and $[x_0,\phi_j^l x_0]$.

 According to Theorem \ref{thm:extract}-\ref{item:quad_lift}, this geodesic quadrangle can be lifted to $X$. By Theorem \ref{thm:extract}-\ref{item:lift_cobound}, it can in fact be lifted in a way that the vertices of the lifted quadrangle are of the form $x_0$, $\gamma x_0$, $\gamma\phi_j^lx_0$, $\phi_i^kx_0$ for some $\gamma\in \Gamma$ with $\pi(\gamma)=\bar\gamma$. This immediately yields the required conclusion.
 \end{proof}

 Note that $\bar\Gamma$ is also hyperbolic as it is a finite-index subgroup of $\bar G$. In view of the Claim, by \cite[Theorem 7.11]{bowditch_relatively_2012} we have that $\bar\Gamma$ is hyperbolic relative to $\{E( \bar\phi_i)\}$, so that using the relatively hyperbolic Dehn filling theorem \cite{osin_peripheral_2007,groves_dehn_2008} there exists $K\in \Z_{>0}$ such that for all choices $\{n_i\}$ of positive integers we can construct a hyperbolic quotient $\bar{\bar{\Gamma}}$ of $\bar\Gamma$ where the image of $\bar\phi_i$ has order $Kn_i$. We then conclude omnipotence from residual finiteness of the quotients $\bar{\bar{\Gamma}}$.
\end{proof}

\bibliographystyle{plain}

\begin{thebibliography}{10}

\bibitem{agol_virtual_2013}
Ian Agol.
\newblock The virtual {H}aken conjecture.
\newblock {\em Doc. Math.}, 18:1045--1087, 2013.
\newblock With an appendix by Agol, Daniel Groves, and Jason Manning.

\bibitem{agol_residual_2009}
Ian Agol, Daniel Groves, and Jason~Fox Manning.
\newblock Residual finiteness, {QCERF} and fillings of hyperbolic groups.
\newblock {\em Geom. Topol.}, 13(2):1043--1073, 2009.

\bibitem{asada_faithfulness_2001}
Mamoru Asada.
\newblock The faithfulness of the monodromy representations associated with
  certain families of algebraic curves.
\newblock {\em J. Pure Appl. Algebra}, 159(2-3):123--147, 2001.

\bibitem{bajpai_omnipotence_2007}
Jitendra Bajpai.
\newblock Omnipotence of surface groups.
\newblock {\em Masters Thesis, {McGill} University}, 2007.

\bibitem{bass_solution_1967}
H.~Bass, J.~Milnor, and J.-P. Serre.
\newblock Solution of the congruence subgroup problem for {${\rm
  SL}_{n}\,(n\geq 3)$} and {${\rm Sp}_{2n}\,(n\geq 2)$}.
\newblock {\em Inst. Hautes \'{E}tudes Sci. Publ. Math.}, 33:59--137, 1967.

\bibitem{behrstock_combinatorial_2024}
Jason Behrstock, Mark Hagen, Alexandre Martin, and Alessandro Sisto.
\newblock A combinatorial take on hierarchical hyperbolicity and applications
  to quotients of mapping class groups.
\newblock {\em Journal of Topology}, 17(3), 2024.

\bibitem{behrstock_hierarchically_2017}
Jason Behrstock, Mark~F. Hagen, and Alessandro Sisto.
\newblock Hierarchically hyperbolic spaces, {I}: {C}urve complexes for cubical
  groups.
\newblock {\em Geom. Topol.}, 21(3):1731--1804, 2017.

\bibitem{bestvina_questions_????}
Mladen Bestvina.
\newblock Questions in geometric group theory.
\newblock
  \url{http://www.math.utah.edu/~bestvina/eprints/questions-updated.pdf}.

\bibitem{bestvina_bounded_2002}
Mladen Bestvina and Koji Fujiwara.
\newblock Bounded cohomology of subgroups of mapping class groups.
\newblock {\em Geom. Topol.}, 6:69--89, 2002.

\bibitem{birman_mapping_1969}
Joan~S. Birman.
\newblock Mapping class groups and their relationship to braid groups.
\newblock {\em Comm. Pure Appl. Math.}, 22:213--238, 1969.

\bibitem{boggi_congruence_2009}
Marco Boggi.
\newblock The congruence subgroup property for the hyperelliptic modular group:
  the open surface case.
\newblock {\em Hiroshima Math. J.}, 39(3):351--362, 2009.

\bibitem{boggi_congruence_2020}
Marco Boggi.
\newblock Congruence topologies on the mapping class group.
\newblock {\em J. Algebra}, 546:518--552, 2020.

\bibitem{boggi_personal_2024}
Marco Boggi.
\newblock Personal communication, 2024.

\bibitem{boileau_profinite_2020}
Michel Boileau and Stefan Friedl.
\newblock The profinite completion of 3-manifold groups, fiberedness and the
  {T}hurston norm.
\newblock In {\em What's next?}, volume 205 of {\em Ann. of Math. Stud.}, pages
  21--44. Princeton Univ. Press, Princeton, NJ, [2020] \copyright 2020.

\bibitem{bowditch_relatively_2012}
B.~H. Bowditch.
\newblock Relatively hyperbolic groups.
\newblock {\em Internat. J. Algebra Comput.}, 22(3):1250016, 66, 2012.

\bibitem{bowditch_tight_2008}
Brian~H. Bowditch.
\newblock Tight geodesics in the curve complex.
\newblock {\em Invent. Math.}, 171(2):281--300, 2008.

\bibitem{bridson_absolute_2020}
M.~R. Bridson, D.~B. McReynolds, A.~W. Reid, and R.~Spitler.
\newblock Absolute profinite rigidity and hyperbolic geometry.
\newblock {\em Annals of Math.}, 192(3), 2020.

\bibitem{bridson_profinite_2023}
Martin~R. Bridson.
\newblock Profinite rigidity and free groups.
\newblock In {\em Mathematics going forward---collected mathematical
  brushstrokes}, volume 2313 of {\em Lecture Notes in Math.}, pages 233--240.
  Springer, Cham, [2023] \copyright 2023.

\bibitem{bridson_profinite_2020b}
Martin~R. Bridson and Alan~W. Reid.
\newblock Profinite rigidity, fibering, and the figure-eight knot.
\newblock In {\em What's next?}, volume 205 of {\em Ann. of Math. Stud.}, pages
  45--64. Princeton Univ. Press, Princeton, NJ, [2020] \copyright 2020.

\bibitem{bridson_profinite_2017}
Martin~R. Bridson, Alan~W. Reid, and Henry Wilton.
\newblock Profinite rigidity and surface bundles over the circle.
\newblock {\em Bull. Lond. Math. Soc.}, 49(5):831--841, 2017.

\bibitem{cheetham-west_profinite_2024}
Tamunonuye Cheetham-West.
\newblock Profinite rigidity and hyperbolic four-punctured sphere bundles over
  the circle.
\newblock \texttt{arXiv:2308.00266}, 2024.

\bibitem{cheetham-west_absolute_????}
Tamunonuye Cheetham-West.
\newblock Absolute profinite rigidity of some closed fibered hyperbolic
  3-manifolds.
\newblock {\em Math. Res. Lett.}, to appear.

\bibitem{dahmani_hyperbolically_2017}
F.~Dahmani, V.~Guirardel, and D.~Osin.
\newblock Hyperbolically embedded subgroups and rotating families in groups
  acting on hyperbolic spaces.
\newblock {\em Mem. Amer. Math. Soc.}, 245(1156):v+152, 2017.

\bibitem{dahmani_normal_2018}
Fran\c{c}ois Dahmani.
\newblock The normal closure of big {D}ehn twists and plate spinning with
  rotating families.
\newblock {\em Geom. Topol.}, 22(7):4113--4144, 2018.

\bibitem{dahmani_dehn_2021}
Fran\c{c}ois Dahmani, Mark Hagen, and Alessandro Sisto.
\newblock Dehn filling {D}ehn twists.
\newblock {\em Proc. Roy. Soc. Edinburgh Sect. A}, 151(1):28--51, 2021.

\bibitem{diaz_every_1989}
Steven Diaz, Ron Donagi, and David Harbater.
\newblock Every curve is a {H}urwitz space.
\newblock {\em Duke Math. J.}, 59(3):737--746, 1989.

\bibitem{ellenberg_arithmetic_2012}
Jordan~S. Ellenberg and D.~B. McReynolds.
\newblock Arithmetic {V}eech sublattices of {${\mathrm{SL}}(2,\mathbf{Z})$}.
\newblock {\em Duke Math. J.}, 161(3):415--429, 2012.

\bibitem{farb_primer_2012}
Benson Farb and Dan Margalit.
\newblock {\em A primer on mapping class groups}, volume~49 of {\em Princeton
  Mathematical Series}.
\newblock Princeton University Press, Princeton, NJ, 2012.

\bibitem{funar_torus_2013}
Louis Funar.
\newblock Torus bundles not distinguished by {TQFT} invariants.
\newblock {\em Geom. Topol.}, 17(4):2289--2344, 2013.
\newblock With an appendix by Funar and Andrei Rapinchuk.

\bibitem{grossman_residual_1974}
Edna~K. Grossman.
\newblock On the residual finiteness of certain mapping class groups.
\newblock {\em Journal of the London Mathematical Society. Second Series},
  9:160---164, 1974.

\bibitem{groves_dehn_2008}
Daniel Groves and Jason~Fox Manning.
\newblock Dehn filling in relatively hyperbolic groups.
\newblock {\em Israel J. Math.}, 168:317--429, 2008.

\bibitem{hempel_some_2014}
John Hempel.
\newblock Some 3-manifold groups with the same finite quotients.
\newblock arXiv:1409.3509, 2014.

\bibitem{ivanov_fifteen_2006}
Nikolai~V. Ivanov.
\newblock Fifteen problems about the mapping class groups.
\newblock In {\em Problems on mapping class groups and related topics},
  volume~74 of {\em Proc. Sympos. Pure Math.}, pages 71--80. Amer. Math. Soc.,
  Providence, RI, 2006.

\bibitem{jaikin-zapirain_recognition_2020}
Andrei Jaikin-Zapirain.
\newblock Recognition of being fibered for compact 3-manifolds.
\newblock {\em Geom. Topol.}, 24(1):409--420, 2020.

\bibitem{kahn_immersing_2012}
Jeremy Kahn and Vladimir Markovic.
\newblock Immersing almost geodesic surfaces in a closed hyperbolic three
  manifold.
\newblock {\em Ann. of Math. (2)}, 175(3):1127--1190, 2012.

\bibitem{kapovich_actions_1996}
Michael Kapovich and Bernhard Leeb.
\newblock Actions of discrete groups on nonpositively curved spaces.
\newblock {\em Math. Ann.}, 306(2):341--352, 1996.

\bibitem{kent_congruence_2016}
Autumn Kent.
\newblock Congruence kernels around affine curves.
\newblock {\em J. Reine Angew. Math.}, 713:1--20, 2016.

\bibitem{kerckhoff_nielsen_1983}
Steven~P. Kerckhoff.
\newblock The {N}ielsen realization problem.
\newblock {\em Ann. of Math. (2)}, 117(2):235--265, 1983.

\bibitem{kirby_problems_1995}
Rob Kirby.
\newblock Problems in low-dimensional topology.
\newblock \url{https://math.berkeley.edu/~kirby/problems.ps.gz}.

\bibitem{kra_nielsen-thurston-bers_1981}
Irwin Kra.
\newblock On the {N}ielsen-{T}hurston-{B}ers type of some self-maps of
  {R}iemann surfaces.
\newblock {\em Acta Math.}, 146(3-4):231--270, 1981.

\bibitem{liu_finite-volume_2023}
Yi~Liu.
\newblock Finite-volume hyperbolic 3-manifolds are almost determined by their
  finite quotient groups.
\newblock {\em Invent. Math.}, 231(2):741--804, 2023.

\bibitem{liu_mapping_2023}
Yi~Liu.
\newblock Mapping classes are almost determined by their finite quotient
  actions.
\newblock {\em Duke Math. J.}, 172(3):569--631, 2023.

\bibitem{lochak_results_2012}
Pierre Lochak.
\newblock Results and conjectures in profinite {T}eichm\"{u}ller theory.
\newblock In {\em Galois-{T}eichm\"{u}ller theory and arithmetic geometry},
  volume~63 of {\em Adv. Stud. Pure Math.}, pages 263--335. Math. Soc. Japan,
  Tokyo, 2012.

\bibitem{masai_fibered_2017}
Hidetoshi Masai.
\newblock Fibered commensurability and arithmeticity of random mapping tori.
\newblock {\em Groups Geom. Dyn.}, 11(4):1253--1279, 2017.

\bibitem{mccarthy_normalizers_1982}
John~D McCarthy.
\newblock Normalizers and centralizers of pseudo-anosov mapping classes.
\newblock Preprint, 1982.

\bibitem{mcreynolds_congruence_2012}
D.~B. McReynolds.
\newblock The congruence subgroup problem for pure braid groups: {T}hurston's
  proof.
\newblock {\em New York J. Math.}, 18:925--942, 2012.

\bibitem{osin_peripheral_2007}
Denis~V. Osin.
\newblock Peripheral fillings of relatively hyperbolic groups.
\newblock {\em Invent. Math.}, 167(2):295--326, 2007.

\bibitem{raghunathan_congruence_2004}
M.~S. Raghunathan.
\newblock The congruence subgroup problem.
\newblock {\em Proc. Indian Acad. Sci. Math. Sci.}, 114(4):299--308, 2004.

\bibitem{stebe_conjugacy_1972b}
Peter~F. Stebe.
\newblock Conjugacy separability of groups of integer matrices.
\newblock {\em Proc. Amer. Math. Soc.}, 32:1--7, 1972.

\bibitem{wilkes_profinite_2017c}
Gareth Wilkes.
\newblock Profinite rigidity for {S}eifert fibre spaces.
\newblock {\em Geom. Dedicata}, 188:141--163, 2017.

\bibitem{wilton_virtual_2010}
Henry Wilton.
\newblock Virtual retractions, conjugacy separability and omnipotence.
\newblock {\em J. Algebra}, 323:323--335, 2010.

\bibitem{wise_subgroup_2000}
Daniel~T. Wise.
\newblock Subgroup separability of graphs of free groups with cyclic edge
  groups.
\newblock {\em The Quarterly Journal of Mathematics}, 51(1):107--129, 2000.

\bibitem{wise_structure_2021}
Daniel~T. Wise.
\newblock {\em The structure of groups with a quasiconvex hierarchy}, volume
  209 of {\em Annals of Mathematics Studies}.
\newblock Princeton University Press, Princeton, NJ, [2021] \copyright 2021.

\end{thebibliography}

\Addresses

\end{document}